\renewcommand\S{§}
\newcommand{\dcup}{\mathbin{\dot\cup}}
\newcommand{\gauss}[2]{\genfrac{[}{]}{0pt}{}{#1}{#2}}
\newcommand{\eps}{\varepsilon}
\renewcommand{\le}{\leqslant}
\renewcommand{\ge}{\geqslant}
\renewcommand{\leq}{\leqslant}
\renewcommand{\geq}{\geqslant}
\newcommand{\coloneq}{\vcentcolon=}      
\newcommand{\lhdeq}{\trianglelefteqslant}    
\newcommand{\rA}{{\mathrm{A}}}
\newcommand{\GL}{{\mathrm{GL}}}
\newcommand{\GO}{{\mathrm{GO}}}
\newcommand{\GU}{{\mathrm{GU}}}
\newcommand{\Sp}{{\mathrm{Sp}}}
\newcommand{\Sym}{{\mathrm{Sym}}}
\newcommand{\F}{{\mathbb F}}
\newcommand{\qK}{q{\kern-1pt}K}
\theoremstyle{thmstyleone}%
\newtheorem{theorem}{Theorem}[section]
\newtheorem{Theorem}[theorem]{Theorem}
\newtheorem{Proposition}[theorem]{Proposition}
\newtheorem{Lemma}[theorem]{Lemma} 
\newtheorem{Corollary}[theorem]{Corollary}
\theoremstyle{thmstyletwo}%
\theoremstyle{thmstylethree}%
\begin{document}

\title[The proportion of non-degenerate complementary subspaces]{The proportion of non-degenerate complementary subspaces
  in classical spaces}


\author[1]{\fnm{S.P.} \sur{Glasby}}\email{Stephen.Glasby@uwa.edu.au}

\author[2]{\fnm{Ferdinand} \sur{Ihringer}}\email{Ferdinand.Ihringer@ugent.be}

\author[3,4]{\fnm{Sam} \sur{Mattheus}}\email{SMattheus@ucsd.edu}

\affil[1]{\orgdiv{Center for the Mathematics and Symmetry and Computation}, \orgname{University of
  Western Australia}, \orgaddress{\city{Perth}, \postcode{6009}, \country{Australia}}}

\affil[2]{\orgdiv{Department~of Mathematics:~Analysis, Logic and Discrete
Mathematics}, \orgname{Ghent University}, \orgaddress{\country{Belgium}}}

\affil[3]{\orgdiv{Department~of Mathematics and Data Science}, \orgname{Vrije Universiteit Brussel}, \orgaddress{\country{Belgium}}}

\affil[4]{\orgdiv{Department~of Mathematics}, \orgname{University of 
California San Diego}, \orgaddress{\country{United States}}}


\abstract{
  Given positive integers $e_1,e_2$, let $X_i$ denote the set of
  $e_i$-dimensional subspaces of a fixed finite vector space $V=(\F_q)^{e_1+e_2}$.
  Let $Y_i$ be a non-empty subset of $X_i$ and let $\alpha_i = |Y_i|/|X_i|$.
  We give a positive lower bound, depending only on $\alpha_1,\alpha_2,e_1,e_2,q$,
  for the proportion of pairs
  $(S_1,S_2)\in Y_1\times Y_2$ which intersect trivially.
 As an application, we bound the proportion of pairs of non-degenerate
 subspaces of complementary dimensions in a finite classical space
 that intersect trivially. This problem is motivated by an algorithm for
 recognizing classical groups. By using techniques from algebraic graph theory,
 we are able to handle orthogonal groups over the field of order~2, a case
 which had eluded Niemeyer, Praeger, and the first author.
}

\keywords{expander mixing lemma, finite classical group, opposition graph}



\maketitle

\section{Introduction}

  In this paper we use techniques from algebraic graph theory to solve a
  problem that arose from computational group theory. More precisely, we
  use the expander mixing lemma for bipartite graphs
  to establish bounds that are useful
  for algorithms which `recognise' classical groups acting on their
  natural module, a central and difficult computational problem. The nature of
  this recognition problem is sketched in Section~\ref{ss}.

  Let $\F$ be a finite field, let $e_1,e_2$ be positive integers
  and let $V=\F^{e_1+e_2}$ be an $(e_1+e_2)$-dimensional
  $\F$-space endowed with a non-degenerate quadratic, symplectic or hermitian
  form. We bound the probability that a non-degenerate $e_1$-subspace $S_1$
  of $V$,
  and a non-degenerate $e_2$-subspace $S_2$ of $V$, intersect trivially
  that is, satisfy $S_1\cap S_2=\{0\}$.
  Except for orthogonal spaces with $q=2$,
  this problem was solved in~\cite[Theorem~1.1]{GNP2022},
  using a combinatorial double-counting argument~\cite[\S3]{GNP2022}.
  The following theorem
  gives sharper bounds, without exception, and is proved via relatively
  straightforward
  calculations involving the second largest eigenvalue of a graph,
  see Section~\ref{ss:k}.
  
  \begin{Theorem}\label{T:OSpU}
    Let $V=\F^{e_1+e_2}$ be a non-degenerate orthogonal, symplectic or hermitian
    space where $\F,e_1,e_2$ are given in Table~\ref{T:data}. Let $Y_i$
    be the set of all non-degenerate $e_i$-subspaces of $V$ (of a fixed type
    $\sigma_i\in\{-,+\}$ in the orthogonal case). Then the proportion of
    pairs $(S_1,S_2)\in Y_1\times Y_2$ for which $S_1\cap S_2=\{0\}$ is
    at least $1-\frac{c}{|\F|}$ where $c$ is given in Table~\ref{T:data}.
    We may take $c=\frac{3}{2}$ if $(e_1,e_2,q)\ne(1,1,2)$.
  \end{Theorem}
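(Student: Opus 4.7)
The plan is to derive Theorem~\ref{T:OSpU} from the general expander-mixing-lemma bound of Section~\ref{ss:k}. That bound applies to the biregular bipartite complementarity graph $\Gamma$ on $X_1 \cup X_2$ (all $e_i$-subspaces of $V$, with edges recording trivial intersection) and controls the proportion of trivially intersecting pairs in an arbitrary $Y_1 \times Y_2$ purely in terms of $\alpha_i = |Y_i|/|X_i|$, $e_1$, $e_2$, and $q$. The two spectral ingredients — the regular edge density $q^{e_1 e_2}/\gauss{e_1+e_2}{e_2}_q = 1 - O(1/q)$ and the second-largest singular value of $\Gamma$ — depend only on the ambient linear algebra, not on the chosen form, so this reduction is form-agnostic.

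Specialising to the $Y_i$ of non-degenerate $e_i$-subspaces of type $\sigma_i$ then comes down to inserting explicit lower bounds for $\alpha_i$. The proportion of non-degenerate subspaces of a prescribed type is a classical quantity with closed-form expressions in Gaussian binomials and the standard polar-space counts; I would extract these in the orthogonal (both $\sigma_i$), symplectic, and hermitian cases and simplify. Because $\alpha_i$ is bounded below by a positive constant depending only on $e_1,e_2,q,\sigma_i$ in every case, the EML error term stays of order $O(1/q)$, and combining it with the edge density yields the claimed lower bound of the shape $1 - c/|\F|$.

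The main obstacle I expect is reconciling the three form families and both orthogonal types into the uniform constant $c = \tfrac{3}{2}$, while separately handling the triple $(e_1, e_2, q) = (1,1,2)$. The advantage of the spectral approach is precisely that the form-specific behaviour enters only through $\alpha_i$, so the orthogonal case at $q = 2$ — which defeated the purely combinatorial arguments of~\cite{GNP2022} — is recovered without special treatment once the right $\alpha_i$ is substituted. For the genuinely exceptional triple $(1,1,2)$, where the subspace counts are too small for the asymptotic inequality to be tight, I would finish by direct enumeration, producing the refined constant recorded in Table~\ref{T:data}.
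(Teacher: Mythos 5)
Your proposal follows essentially the same route as the paper: apply the expander mixing lemma to the $q^{e_1e_2}$-regular bipartite complementarity graph (Proposition~\ref{lem:density}), compute $\alpha_i$ from the classical group orders via orbit--stabilizer, and then grind out the constant $c$ with special handling of small parameters. The only caveat is that the set of cases needing non-spectral treatment is larger than the single triple $(1,1,2)$ you flag --- the paper falls back on a direct counting argument from~\cite{GNP2022} for all unitary cases with $\min\{e_1,e_2\}=1$, and on exact computer counts for seven small orthogonal cases --- but this affects only the bookkeeping, not the strategy.
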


  \begin{table}[!ht]
    \label{T:data}
    \caption{Choices for $\F$, $e_1$, $e_2$, form on $V$, $Y_i$ and $c$
    in Theorem~\ref{T:OSpU}}\vskip3mm
    \centering
  \begin{tabular}{lcclll}
    \toprule
    $\F$& $e_1$ & $e_2$& form on $V=\F^{e_1+e_2}$ & $e_i$-subspaces $Y_i$&$c$\\ 
    \midrule
    $\F_q$& even & even & orthogonal of & non-degenerate of&$\sfrac{3}{2}$\\ 
     &  &  & type $\eps\in\{-,+\}$& type $\sigma_i\in\{-,+\}$&\\ 
    $\F_q$& even & even & symplectic& non-degenerate&$\sfrac{10}{7}<\sfrac{3}{2}$\\ 
    $\F_{q^2}$& $\ge1$ & $\ge1$ & hermitian& non-degenerate&  $\sfrac{3}{2}$\\
    &  &  & &$(e_1,e_2,q)\ne(1,1,2)$& \\
    \bottomrule
  \end{tabular}
  \end{table}

  The non-degenerate subspaces of a symplectic space have even dimension
  so that $e_1$ and $e_2$ are both even in the second line of
  Table~\ref{T:data}. In the case that $V$ is a non-degenerate orthogonal space
  the $e_i$ are also both even; however this is for a different reason.
  The authors of~\cite{GNP2022a}, in forthcoming work, describe an algorithm
  for recognising classical groups, and the papers~\cite{GNP2022,GNP2022a}
  provide the necessary background. In the algorithmic application the subspace
  $S_i$ is the image of $g_i-1$ for some element $g_i$ of the orthogonal
  group on $V$, and $S_i$ is non-degenerate of \emph{minus} type
  by~\cite[Lemma~3.8(b)]{GNP2022a}. Hence the $e_i$ are even in the first
  line of Table~\ref{T:data}, as claimed.

  In the unitary case, we have $c=2$ when $(e_1,e_2,q)=(1,1,2)$ and
  $c=1.26$ when $e_1,e_2\ge2$, see Theorem~\ref{T:U}.
  The bounds listed in Table~\ref{T:data} all
  satisfy  $1-\frac{3}{2|\F|}\ge\frac{1}{4}$, and they
  facilitate a uniform analysis, for all fields,
  of a randomized algorithm for recognising classical
  groups.  In contrast, the values of $c$ in~\cite[Table~1]{GNP2022} are
  2.69 (for $q\ge3$), 1.67 and 1.8 in the
  orthogonal, symplectic and unitary cases, respectively.
  Further, our methods are somewhat stronger and easier to apply than those
  in~\cite{GNP2022}, and offer hope for extensions, see Section~\ref{S:future}.

\subsection{\texorpdfstring{$q$}{q}-Kneser graphs}\label{ss:k}
Let $V=(\F_q)^d$ be a $d$-dimensional vector space over the field
with $q$ elements. Let $e_1,e_2$ be positive integers. 
For $i=1,2$, denote by $X_i$ the set of $e_i$-dimensional subspaces of $V$.
We refer to an element $S_i\in X_i$ as an \emph{$e_i$-subspace}
or an \emph{$e_i$-space}.
Let $\Gamma_{d,e_1,e_2}$ be the bipartite graph whose vertex set is the
disjoint union $X_1 \dcup X_2$ (where we take two disjoint copies 
of the set of $e_1$-spaces if $e_1=e_2$), and 
where two vertices $(S_1, S_2) \in X_1 \times X_2$ are adjacent whenever
$S_1$ and $S_2$ intersect trivially. The condition $S_1 \cap S_2 =\{0\}$ is equivalently to
$\dim(S_1+ S_2)=e_1+e_2$.

\smallskip

The $q$-Kneser graph $\qK(d,e)$ has been previously studied,
for example, 
see \cite{BBS2011,CCESW2020}. The vertices of $\qK(d,e)$
comprise $e$-subspaces
of $V=(\F_q)^d$ and $\{S_1,S_2\}$ is an edge if $S_1\cap S_2=\{0\}$.
If $\qK(d,e)$ has adjacency matrix $A$, then the \emph{bipartite double}
of $\qK(d,e)$ has adjacency matrix
$\left[\begin{smallmatrix}0&A\\A^T&0\end{smallmatrix}\right]$ and is isomorphic
to $\Gamma_{d,e,e}$. 
The spectrum of $qK(d,e)$ (i.e, the set of eigenvalues of $A$) is known,
and hence too for its bipartite double $\Gamma_{d,e,e}$, and can be obtained from
Delsarte \cite[Theorem~10]{Delsarte1976} or Eisfeld \cite[Theorem~2.7]{Eisfeld1999}.
The spectrum of $\Gamma_{d,e_1,e_2}$ when $e_1\ne e_2$ is more complicated. 
Brouwer \cite{Brouwer2010} gives the spectrum of $\Gamma_{3,1,2}$; also
Suda and Tanaka~\cite{ST2014} study ``cross-independent'' sets
in $\Gamma_{d,e_1,e_2}$ with $d \geq 2e_1, 2e_2$. However, for our applications
we want $d=e_1+e_2$, as this is the key case for~\cite{GNP2022}
which underpins~\cite{GNP2022a}.

\smallskip 

We henceforth assume that $d=e_1+e_2$, and write
$\Gamma_{e_1,e_2} = \Gamma_{d,e_1,e_2}$.
Since $\Gamma_{e_1,e_2}\cong\Gamma_{e_2,e_1}$,
we shall assume additionally, without loss of generality, that $e_1\ge e_2$.

\smallskip 

For each $e_1$-subspace $S_1$ of $(\F_q)^{e_1+e_2}$, there are $q^{e_1e_2}$
choices for an $e_2$-subspace $S_2$ with $S_1\cap S_2=\{0\}$. Similarly,
for each $e_2$-subspace $S_2$ there are $q^{e_2e_1}$
choices for an $e_1$-subspace $S_1$ with $S_1\cap S_2=\{0\}$.
Hence the graph $\Gamma_{e_1,e_2}$ is $q^{e_1e_2}$-regular.
The following result is proved in Section~\ref{sec:eigenvalues}, it determines
the distinct eigenvalues of $\Gamma_{e_1,e_2}$, but not their multiplicities.

\begin{Proposition}\label{lem:ev}
  Suppose that $e_1\ge e_2 \ge 1$ and $d=e_1+e_2$.
  The distinct eigenvalues of the bipartite graph $\Gamma_{e_1,e_2}$ are
  $\lambda_0>\cdots>\lambda_{e_2}>-\lambda_{e_2}>\cdots>-\lambda_0$
  where $\lambda_j=q^{m_j}$ for $0 \leq j \leq e_2$
  and $m_j = e_1e_2 - \frac{j(d+1-j)}{2}$.
\end{Proposition}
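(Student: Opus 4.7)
The plan is to exploit the bipartite structure of $\Gamma_{e_1,e_2}$: its adjacency matrix has block form $\left[\begin{smallmatrix}0&A\\A^T&0\end{smallmatrix}\right]$ with biadjacency matrix $A\in\{0,1\}^{X_1\times X_2}$, so its eigenvalues are exactly $\pm\sigma$ as $\sigma$ ranges over the singular values of $A$. Hence it suffices to compute the eigenvalues of the positive semidefinite matrix $M\coloneq A^TA$ on $\mathbb{C}^{X_2}$ and take signed square roots.

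Since $M$ is $\GL_d(\F_q)$-invariant and the diagonal orbits of $\GL_d(\F_q)$ on $X_2\times X_2$ are indexed by $\dim(S\cap T)\in\{0,1,\ldots,e_2\}$, one has $M=\sum_{k=0}^{e_2}c_kB_k$, where $B_k$ is the distance-$k$ relation matrix of the Grassmann scheme $J_q(d,e_2)$ and $c_k$ counts $e_1$-subspaces complementary to two given $e_2$-subspaces whose intersection has dimension $e_2-k$. A quotient-space reduction (set $L=S\cap T$ and pass to $V/L$ using the modular law) yields $c_k=q^{e_1(e_2-k)}N_k$, with $N_k$ counting $e_1$-subspaces of $\F_q^{e_1+k}$ disjoint from two fixed disjoint $k$-subspaces --- a quantity expressible in closed form via $q$-binomials.

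To diagonalise $M$ I would use the well-known decomposition $\mathbb{C}^{X_2}=\bigoplus_{j=0}^{e_2}W_j$ of the permutation module into $e_2+1$ pairwise non-isomorphic irreducible $\GL_d(\F_q)$-modules; these are the common eigenspaces of every $B_k$. By Schur's lemma $M$ acts on $W_j$ as a scalar $\lambda_j^2$, and by Delsarte \cite[Theorem~10]{Delsarte1976} and Eisfeld \cite[Theorem~2.7]{Eisfeld1999} the eigenvalue of $B_k$ on $W_j$ is a $q$-Eberlein polynomial $P_{kj}$, whence $\lambda_j^2=\sum_{k=0}^{e_2}c_kP_{kj}$. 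This already bounds the number of distinct eigenvalues of $\Gamma_{e_1,e_2}$ by $2(e_2+1)$, as required.

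The main obstacle is the explicit evaluation $\sum_kc_kP_{kj}=q^{2m_j}$, which I would expect to reduce --- after routine simplification --- to an instance of the $q$-Vandermonde or $q$-Pfaff--Saalsch\"utz identity. A hands-on alternative avoiding the $P$-matrix is to test $M$ on the vectors $f_W\coloneq\sum_{S_2\supseteq W}\chi_{S_2}$ parametrised by $j$-subspaces $W$: a direct count gives $Af_W=q^{e_1(e_2-j)}g_W$ with $g_W\coloneq\sum_{S_1:\,S_1\cap W=\{0\}}\chi_{S_1}$, and iterating the analogous computation for $A^T$ through the natural filtration on $j$ peels off $W_j$ and delivers $\lambda_j=q^{m_j}$. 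Finally, the strict chain $\lambda_0>\cdots>\lambda_{e_2}>0$ follows from $m_j-m_{j+1}=(d-2j)/2>0$ (using $j<e_2\le d/2$) and $m_{e_2}=e_2(e_1-1)/2\ge 0$.
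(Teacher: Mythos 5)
Your framework is correct and genuinely different from the paper's. The paper computes the spectrum through Brouwer's oppositeness machinery for spherical buildings, running Algorithms~1 and 2 of \cite{DBMM2020} on the Coxeter group $W=\Sym(d)$: Pieri's rule gives $\mathrm{ind}^W_{W_J}(1_{W_J})=\sum_{j}\chi_{[d-j,j]}$, and the character-ratio formula of Geck--Pfeiffer converts each $\chi_{[d-j,j]}$ directly into the exponent $m_j$ by an elementary computation with $a(\mu)$ and $a^*(\mu)$. You instead work inside the Bose--Mesner algebra of the Grassmann scheme $J_q(d,e_2)$, realising $\lambda_j^2$ as the eigenvalues of $A^TA$. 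Your route avoids the building-theoretic input entirely and is self-contained modulo classical facts about the Grassmann scheme; the paper's route has the advantage that the quadratic exponent falls out of a one-line character calculation rather than a $q$-hypergeometric summation.

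However, there is a genuine gap, and you have flagged it yourself: neither of your two routes actually derives the exponent $m_j=e_1e_2-\frac{j(d+1-j)}{2}$, which is the entire content of the proposition. In the first route you assert $\sum_k c_kP_{kj}=q^{2m_j}$ and express the hope that it is ``an instance of $q$-Vandermonde or $q$-Pfaff--Saalsch\"utz''; that summation is precisely what must be proved, and you supply neither the closed form of $c_k$ nor of $P_{kj}$ nor the evaluation. In the second route the first half is correct ($Af_W=q^{e_1(e_2-j)}g_W$ does follow from counting complements in $V/W$), but the half you compress into ``iterating the analogous computation for $A^T$ through the natural filtration peels off $W_j$'' is exactly where the quadratic term in $j$ is born and is not routine: one must prove
\[
A^Tg_W\equiv q^{\,e_1e_2-j(e_2+1-j)}\,f_W \pmod{U_{j-1}},\qquad U_{j-1}\coloneq\mathrm{span}\{f_{W'}:\dim W'\le j-1\},
\]
which requires a M\"obius-inversion count of the complements $S_1$ of $S_2$ with $S_1\cap W=\{0\}$ as a function of $k=\dim(S_2\cap W)$ (the count is $q^{e_1e_2}\prod_{l=0}^{j-k-1}(1-q^{\,l+k-e_2})$ by the $q$-binomial theorem), followed by extraction of the leading coefficient in the basis $\{f_{W'}\}$. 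The exponents do combine correctly, since $e_1(e_2-j)+e_1e_2-j(e_2+1-j)=2m_j$, so the argument can be completed; but until one of these computations is written out, the values $\lambda_j=q^{m_j}$ --- and hence the strict ordering you deduce from $m_j-m_{j+1}=(d-2j)/2>0$ --- rest on an unproved identity.
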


The number $\gauss{a}{b}_q$ of $b$-subspaces of the
$a$-dimensional vector space $(\F_q)^a$ is
\[
\gauss{a}{b}_q = \prod_{i=0}^{b-1} \frac{q^{a-i}-1}{q^{b-i}-1}
=\prod_{i=0}^{b-1}\frac{q^{a-i-1}+\cdots+q+1}{q^{b-i-1}+\cdots+q+1}
= \prod_{i=1}^b \frac{q^{a-i+1}-1}{q^i-1}.
\]
The second middle product shows that
$\lim_{q\to 1}\gauss{a}{b}_q = \prod_{i=0}^{b-1} \frac{a-i}{b-i}=\binom{a}{b}$, and
$\gauss{a}{b}_q\sim q^{b(a-b)}$ as $q\to\infty$.
The next result is proved in Section~\ref{S3} using
Proposition~\ref{lem:ev}, and the
expander mixing lemma for regular bipartite graphs, see Lemma~\ref{lem:eml}.

\begin{Proposition}\label{lem:density}
  Suppose that $e_1\ge e_2 \ge 1$ and $d=e_1+e_2$.
  Let $Y_1 \subseteq X_1$ and $Y_2 \subseteq X_2$ be non-empty.
  Put $\alpha_i = |Y_i|/|X_i|$ for $i \in \{ 1, 2\}$.
  Then $\alpha_1\alpha_2>0$~and
  \smallskip
  \[
    \frac{|\{(S_1, S_2) \in Y_1 \times Y_2: S_1 \cap S_2 = 0\}|}{|Y_1| \cdot |Y_2|}  
    \geq \frac{q^{e_1e_2}}{\gauss{d}{e_1}_q}
    \left(1 - \sqrt{(\tfrac{1}{\alpha_1}-1)(\tfrac{1}{\alpha_2}-1)} q^{-\frac{d}{2}}\right).
  \]
  Suppose that $\min\{\alpha_1,\alpha_2\}\ge\alpha>0$
  and $\omega_q(e)=\prod_{i=1}^{e}(1-q^{-i})$. Then
  \smallskip
  \[
    \frac{|\{(S_1, S_2) \in Y_1 \times Y_2: S_1 \cap S_2 = 0\}|}{|Y_1| \cdot |Y_2|}  
    >\omega_q(e_2)\left(1 -\left(\frac{1}{\alpha}-1\right)q^{-\frac{d}{2}}\right).
  \]
\end{Proposition}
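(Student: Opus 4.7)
The plan is to apply the expander mixing lemma for biregular bipartite graphs (Lemma~\ref{lem:eml}) to $\Gamma_{e_1,e_2}$, with eigenvalue data supplied by Proposition~\ref{lem:ev}. Because $d=e_1+e_2$, the two vertex classes have the same cardinality $N\coloneq\gauss{d}{e_1}_q=\gauss{d}{e_2}_q$, the graph is $k$-regular with $k=q^{e_1e_2}$, and Proposition~\ref{lem:ev} identifies its second largest eigenvalue as $\lambda_1=q^{m_1}=q^{e_1e_2-d/2}$.

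For the first inequality, let $e(Y_1,Y_2)$ denote the number of edges of $\Gamma_{e_1,e_2}$ between $Y_1$ and $Y_2$; by the definition of the graph this is precisely the numerator on the left-hand side. Lemma~\ref{lem:eml} yields
\[
\left|\,e(Y_1,Y_2)-\tfrac{k}{N}|Y_1||Y_2|\,\right|\le\lambda_1\sqrt{|Y_1||Y_2|(1-\alpha_1)(1-\alpha_2)}.
\]
Dividing by $|Y_1||Y_2|=\alpha_1\alpha_2 N^2$, substituting $\lambda_1/k=q^{-d/2}$, and rewriting $(1-\alpha_i)/\alpha_i=\tfrac{1}{\alpha_i}-1$ rearranges directly into the first claimed inequality.

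For the second inequality I would combine two elementary estimates. First, since $\min\{\alpha_1,\alpha_2\}\ge\alpha$ and $\sqrt{xy}\le\max\{x,y\}$ for non-negative $x,y$, we have $\sqrt{(\tfrac{1}{\alpha_1}-1)(\tfrac{1}{\alpha_2}-1)}\le\tfrac{1}{\alpha}-1$. Second, after reindexing one rewrites $\gauss{d}{e_2}_q=\prod_{j=1}^{e_2}\frac{q^{e_1+j}-1}{q^j-1}$, and pulling out powers of $q$ via $q^a-1=q^a(1-q^{-a})$ gives
\[
\frac{q^{e_1e_2}}{\gauss{d}{e_2}_q}=\prod_{j=1}^{e_2}\frac{1-q^{-j}}{1-q^{-(e_1+j)}}=\frac{\omega_q(e_2)}{\prod_{j=1}^{e_2}(1-q^{-(e_1+j)})}>\omega_q(e_2),
\]
where the strict inequality uses $0<1-q^{-(e_1+j)}<1$. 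Substituting both estimates into the first inequality yields the second.

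With Lemma~\ref{lem:eml} and Proposition~\ref{lem:ev} in hand the main obstacle is very mild: the whole proof reduces to this algebraic bookkeeping. The only slightly delicate point is the strict inequality in the conclusion, which propagates from the strict $>$ displayed above whenever the parenthetical factor $1-(\tfrac{1}{\alpha}-1)q^{-d/2}$ is positive, and is automatic when it is non-positive since the left-hand side is always non-negative.
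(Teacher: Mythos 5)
Your proof is correct and takes essentially the same route as the paper: apply the bipartite expander mixing lemma (Lemma~\ref{lem:eml}) to the $q^{e_1e_2}$-regular graph $\Gamma_{e_1,e_2}$ with second-largest eigenvalue $q^{e_1e_2-d/2}$ from Proposition~\ref{lem:ev}, divide through by $|Y_1||Y_2|$, and then use the two elementary estimates $\sqrt{(\tfrac{1}{\alpha_1}-1)(\tfrac{1}{\alpha_2}-1)}\le\tfrac{1}{\alpha}-1$ and $q^{e_1e_2}/\gauss{d}{e_1}_q=\omega_q(e_1)\omega_q(e_2)/\omega_q(e_1+e_2)>\omega_q(e_2)$. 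Your explicit discussion of how strictness propagates is in fact slightly more careful than the paper's.
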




\subsection{Recognising classical groups and outline of the paper}\label{ss}
A group $G$ satisfying $\textup{SL}_d(q)\lhdeq G\le\GL_d(q)$
is generated by a set $\mathcal{X}$ of elementary matrices,
corresponding to elementary row operations. Furthermore, given an
element $g\in G$ there is an efficient algorithm (e.g.\ Gaussian elimination)
which writes $g$ as a word in $\mathcal{X}$. However, in computational problems
$G$ may be generated by a set $\mathcal{Y}$ of arbitrary-looking matrices, and
`recognising' $G$ involves writing each element of $\mathcal{X}$
as a word in $\mathcal{Y}$. A particularly helpful special case
is when $G$ is generated by a set $\mathcal{Y}'=\{g_1,g_2\}$ of two matrices
with $S_1=\textup{im}(g_1-1)$ and $S_2=\textup{im}(g_2-1)$ non-degenerate complementary subspaces, and a key problem is to write each element of $\mathcal{X}$
as a word in $\mathcal{Y}'$. This problem is practically difficult,
as is the analogous problem for classical groups,  and its solution uses
random selections in $G$ and the natural $G$-module $V=(\F_q)^d$. The authors
of~\cite{GNP2022,GNP2022a} describe in forthcoming work an algorithm 
to solve this word problem, and the translation from a problem in group theory to a
geometric problem is described, in part, in~\cite{GNP2022,GNP2022a}.
Further context and details are given in~\cite[Section~3]{GNP2022a}.

In Section~\ref{sec:eigenvalues}, we determine the distinct eigenvalues of $\Gamma_{e_1,e_2}$ by proving
Proposition~\ref{lem:ev}. The proof relies on an explicit algorithm
in~\cite{DBMM2020} based on the seminal work of
Brouwer~\cite{Brouwer2010}. In Section~\ref{S3}, the role of the second largest
eigenvalue $\lambda_2$ of $\Gamma_{e_1,e_2}$ is elucidated in the expander
mixing lemma for bipartite graphs: we give a short proof
in Lemma~\ref{lem:eml}. In addition, we prove Proposition~\ref{lem:density}
which shows that bounds (lower and upper) can be determined simply by
computing two ratios $\alpha_1$ and $\alpha_2$. Bounds for the
orthogonal case are computed in Section~\ref{sec:GNP}, for the
symplectic and unitary cases in Section~\ref{sec:GNP2}, and computing
$\alpha_1,\alpha_2$ is key. The orthogonal case is hardest because of the
types of the (even dimensional) non-degenerate subspaces $S_1$ and $S_2$.
Finally, Section~\ref{S:future} discusses the general case $d>e_1+e_2$.

\section{Eigenvalues}\label{sec:eigenvalues}

The graph $\Gamma_{e_1,e_2}$ can be described in the spherical building
of type $\rA_{d-1}$, corresponding to the classical group $\textup{PSL}_d(q)$. 
Adjacency in $\Gamma_{e_1,e_2}$ 
corresponds to opposition in $\rA_{d-1}$ (that is, an $e_1$-space
and an $e_2$-space in $\rA_{d-1}$ are opposite in a 
building-theoretical sense precisely when they are complementary,
see~\cite{Brouwer2010} and~\cite[Lemma 3.7]{DBMM2020}).
The Coxeter diagram for $\rA_{d-1}$ is shown in Fig.~\ref{F}.
Brouwer observed in \cite[Theorem 1.1]{Brouwer2010} that for any
opposition graph of a spherical building over $\F_q$,
its eigenvalues are powers of~$q$.
Implicitly, \cite{Brouwer2010} describes an algorithm
to calculate the eigenvalues of graphs such as 
$\Gamma_{e_1,e_2}$.
This algorithm is explicitly stated
in \cite[Algorithms~1,\,2]{DBMM2020},
which we sometimes refer to simply as Algorithms~1,\,2.

\begin{figure}[!ht]
\centering
\begin{tikzpicture}[scale=1.4]  
  \node [above] at (0,0.1) {$1$};
  \node [above] at (1,0.1) {$2$};
  \node [above] at (2.5,0.1) {$e_2$};
  \node [above] at (4,0.1) {$e_1$};
  \node [above] at (5.5,0.1) {$d-2$};
  \node [above] at (6.5,0.1) {$d-1$};
  \draw (0,0)--(1.5,0);\draw (2,0)--(3,0);
  \draw (3.5,0)--(4.5,0);\draw (5,0)--(6.5,0);
  \draw [dotted, thick] (1.5,0) -- (2,0);
  \draw [dotted, thick] (3,0) -- (3.5,0);
  \draw [dotted, thick] (4.5,0) -- (5,0);
  \draw [fill] (0,0) circle [radius=0.1];
  \draw [fill] (1,0) circle [radius=0.1];
  \draw [fill] (2.5,0) circle [radius=0.1];
  \draw [fill] (4,0) circle [radius=0.1];
  \draw [fill] (5.5,0) circle [radius=0.1];
  \draw [fill] (6.5,0) circle [radius=0.1];
\end{tikzpicture}
\smallskip
\caption{The Coxeter diagram of $\rA_{d-1}$ where $d=e_1+e_2$ and $e_1\ge e_2$.}
\label{F}
\end{figure}
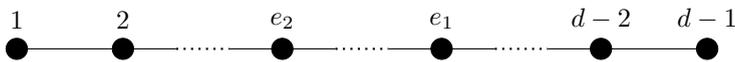

The key observation in \cite{Brouwer2010} is that 
we can calculate the eigenvalues of the oppositeness
relation from the irreducible characters of the Coxeter
group associated with the building.
In the case of $\rA_{d-1}$ this means
that we can calculate the eigenvalues of
any opposition graph in $\rA_{d-1}$ such as $\Gamma_{e_1,e_2}$
from the irreducible characters of the symmetric group $\Sym(d)$.

The symmetric group is viewed in this setting as a Coxeter group with
the set of adjacent transpositions $\{s_1,\dots,s_{d-1}\}$ as its set of 
generators $S$,
where $s_i = (i, i+1)$ for $i\in\{1,\dots,d-1\}$.

The unique longest word in $\Sym(d)$ with respect to the Coxeter generators
is denoted $w_0$. Its length is $\binom{d}{2}$ and
\[
 w_0=\begin{pmatrix}
   1 & 2 & \cdots & d \\
   d & d-1 & \cdots & 1
 \end{pmatrix}=\prod_{i=1}^{d-1}s_{d-1}\cdots s_{i+1}s_i.
\]
For instance, $w_0=s_3s_2s_1s_3s_2s_3$ when $d=4$.

We follow~\cite[Algorithms 1, 2]{DBMM2020}
to calculate the eigenvalues of $\Gamma_{e_1,e_2}$ \emph{up to sign}.
This suffices since the graph $\Gamma_{e_1,e_2}$ is bipartite, and
$\lambda$ is an eigenvalue  of $\Gamma_{e_1,e_2}$
if and only if $-\lambda$ is an eigenvalue.

\medskip 

We apply Algorithm 2 to the building of type $\rA_{d-1}$
with Coxeter group $W = \Sym(d)$.
An $e_1$-space is a partial flag of type $\{ e_1 \}$, its cotype is $J=
\{ 1, \ldots, d-1 \} \setminus \{ e_1 \}$.
Set $W_J = \langle s_i: i \in J \rangle$. Then
$W_J \cong \Sym(e_1) \times \Sym(e_2)$ as~$e_2=d-e_1$.

A \emph{partition} $\mu$ of $d$, denoted $\mu\vdash d$, is a
sequence $[\mu_1, \ldots, \mu_k]$ of positive integers with
$\mu_1\ge\cdots\ge\mu_k>0$ and $\sum_{i=1}^k\mu_i=d$. The irreducible
complex characters of $\Sym(d)$ have the form $\chi_\mu$ for a
unique $\mu\vdash d$. The parts of the conjugate partition $\mu^*$ of $\mu$
satisfy $\mu^*_i=|\{j\mid \mu_j\ge i\}|$.
We define
two invariants $a(\mu)$ and $a^*(\mu)$ as
\begin{equation}\label{E:aa*}
  a(\mu) = \sum_{i=1}^k (i-1) \mu_i,\quad\textup{and}\quad
  a^*(\mu) = \sum_{i=1}^k\frac{\mu_i(\mu_i-1)}{2}= \sum_{i=1}^k \binom{\mu_i}{2},
\end{equation}
and note that $a^*(\mu)=a(\mu^*)$, see \cite[\S\S 5.4.1, 5.4.2]{GP2000} and
\emph{c.f.}~\cite[Proposition~3.3]{DBMM2020}.

\begin{Proposition}[{\cite[Proposition 5.4.11]{GP2000}}]
  \label{prop:exponent}
  Let $d \geq 1$. Let $\chi_\mu$ denote a character of $\Sym(d)$
  corresponding to the partition $\mu$ of $d$.  Then 
  \[
  \binom{d}{2} \frac{\chi_\mu(r)}{\chi_\mu(1)} = a^*(\mu) - a(\mu),
  \quad\text{where $r\in\Sym(d)$ is a transposition.}
  \]
\end{Proposition}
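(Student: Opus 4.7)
The plan is to read $\binom{d}{2}\chi_\mu(r)/\chi_\mu(1)$ as the scalar by which a specific central element of $\mathbb{C}\Sym(d)$ acts on the irreducible module $V_\mu$ affording $\chi_\mu$, and then to compute that scalar in two independent ways.

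First, I would set $C=\sum_{t\in T}t$, where $T$ is the conjugacy class of transpositions in $\Sym(d)$. As a class sum, $C$ lies in the centre of $\mathbb{C}\Sym(d)$, so by Schur's lemma it acts on $V_\mu$ by a scalar $c_\mu$. Taking traces yields
\[
  c_\mu\,\chi_\mu(1)=\chi_\mu(C)=|T|\,\chi_\mu(r)=\binom{d}{2}\chi_\mu(r),
\]
so the left-hand side of the proposition equals $c_\mu$ and the task becomes to identify $c_\mu$ intrinsically.

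Next, I would compute $c_\mu$ via the Jucys--Murphy elements $J_k=\sum_{1\le i<k}(i,k)$ for $2\le k\le d$. Each transposition $(a,b)$ with $a<b$ appears in exactly one $J_k$, namely $J_b$, so $C=J_2+\cdots+J_d$. It is classical (in the Okounkov--Vershik framework, or directly from Young's seminormal form) that the $J_k$ act simultaneously diagonally in a basis of $V_\mu$ indexed by standard Young tableaux $t$ of shape $\mu$, the eigenvalue of $J_k$ on the basis vector $v_t$ being the content $j-i$ of the box of $t$ containing the entry $k$. Summing over $k$, every $v_t$ is an eigenvector of $C$ with the same eigenvalue $\sum_{(i,j)\in\mu}(j-i)$ (necessarily independent of $t$, since $C$ must act as a scalar); hence $c_\mu$ equals the total content of the Young diagram of $\mu$. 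I would then carry out the elementary row-by-row computation: row $i$ contributes
\[
  \sum_{j=1}^{\mu_i}(j-i)=\binom{\mu_i}{2}-(i-1)\mu_i,
\]
and summing over $i$ and invoking the definitions~\eqref{E:aa*} gives $c_\mu=a^*(\mu)-a(\mu)$, completing the proof.

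The main obstacle is the second step: although the spectral description of the Jucys--Murphy elements is standard, it is a non-trivial structural input, and a fully self-contained treatment requires building up the seminormal form. A technically different but self-contained alternative is to invoke the Murnaghan--Nakayama rule for the class $(2,1^{d-2})$, which writes $\chi_\mu(r)$ as an alternating sum over partitions of $d-2$ obtained from $\mu$ by removing a border strip of size $2$; reorganising the resulting contributions by the row (horizontal domino) or column (vertical domino) of removal, and dividing by $\chi_\mu(1)$, recovers the same expression $a^*(\mu)-a(\mu)$ after multiplication by $\binom{d}{2}$. The JM route is shorter, but the Murnaghan--Nakayama route avoids the seminormal machinery entirely.
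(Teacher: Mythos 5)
Your argument is correct, but note that the paper does not prove this statement at all: it is imported verbatim from \cite[Proposition~5.4.11]{GP2000}, so there is no internal proof to compare against. Your main route --- identifying $\binom{d}{2}\chi_\mu(r)/\chi_\mu(1)$ with the scalar by which the class sum of transpositions acts on $V_\mu$, writing that class sum as $J_2+\cdots+J_d$, and reading off the total content $\sum_{(i,j)\in\mu}(j-i)=\sum_i\left(\binom{\mu_i}{2}-(i-1)\mu_i\right)=a^*(\mu)-a(\mu)$ --- is the classical Frobenius computation; it is complete modulo the spectral description of the Jucys--Murphy elements, which you correctly flag as the one nontrivial external input, and your row-by-row evaluation matches the definitions in~\eqref{E:aa*}. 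One caveat on your proposed alternative: the Murnaghan--Nakayama route is less self-contained than you suggest, because after removing the two-box border strips you must still evaluate the dimension ratios $\chi_\nu(1)/\chi_\mu(1)$ for the resulting partitions $\nu$ of $d-2$ (e.g.\ via the hook length formula), so it trades the seminormal-form machinery for a different piece of machinery rather than eliminating it. What your proof buys over the paper's treatment is a self-contained derivation of the identity; what the bare citation buys is brevity and consistency with the conventions of \cite{GP2000}, whose invariants $a(\mu)$ and $a^*(\mu)$ the paper adopts wholesale.
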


Following \cite[Algorithm~1]{DBMM2020}, we denote by $R$ a set of 
representatives
of the conjugacy classes containing the generators in $S$. Then observe that $R$ 
comprises
one transposition $r$ since the conjugacy class $s_i^W$ comprises all
transpositions in $W=\Sym(d)$ for any $i \in \{1,\dots,d-1\}$,
so that $|r^W|=\binom{d}{2}$. Furthermore, the structure constant
$q_s$ in Algorithm~1 equals $q$ by the comment
after~\cite[Proposition~2.4]{DBMM2020}. In summary, the output of Algorithm~1
is the eigenvalue $\lambda_\mu$, where $\lambda^2_\mu=q^{e_\mu}$
and the value of
$e_\mu=\binom{d}{2}\left(1+\frac{\chi_\mu(s)}{\chi_\mu(1)}\right)$ is
independent of the choice of~$s\in S$.

Algorithm 2 applied to $W=\Sym(d)$ can be described as follows.
It is convenient to compute the eigenvalue $\lambda_\mu$ of 
$\chi_\mu$ up to sign, as remarked above.
\smallskip
\begin{enumerate}
\item Decompose the induced character $\mathrm{ind}^W_{W_J}(1_{W_J})$
  as a sum $\sum \chi_\mu$ of irreducible characters of $W$, and determine
  the relevant partitions $\mu$ of~$d$.
 \item For each $\mu$ appearing in Step 1, 
   calculate  using Proposition \ref{prop:exponent} the exponent $e_\mu = \binom{d}{2}\left(1+ \frac{\chi_\mu(r)}{\chi_\mu(1)}\right)$ where~$r$
   is a transposition.
 \item Calculate the length $\ell=\binom{e_1}{2}+\binom{e_2}{2}$ of
   the longest word in $W_J$, see below.
 \item The eigenvalues of $\Gamma_{e_1, e_2}$
   are now $\pm q^{e_\mu/2 - \ell}$ with $\mu$ determined in Step~1.
\end{enumerate}
\smallskip
This description concurs with that
of Algorithm 2 in \cite{DBMM2020}, except that in Step 2,
for the output of Algorithm 1 we use buildings of type $\rA_{d-1}$
and Proposition~\ref{prop:exponent}.

\begin{proof}[Proof of Proposition \ref{lem:ev}]
  Step 1 of Algorithm 2 determines, via Frobenius reciprocity,
  the irreducible characters of $W=\Sym(d)$
  that do not vanish when restricted to $W_J$.
  Precisely, we apply Pieri's rule~\cite[Corollary~6.1.7]{GP2000}
  to find the decomposition
  $\mathrm{ind}^W_{W_J}(1_{W_J}) = \sum_{j=0}^{e_1} \chi_{[d-j,j]}$.
  This completes Step 1 of Algorithm 2.
    
  For Step 2 of Algorithm 2, we apply Proposition~\ref{prop:exponent}
  to each character $\chi_{[d-j,j]}$ of $\Sym(d)$. Write
  $\mu = [ d-j, j ]$ where $d-j\ge e_1\ge e_2\ge j$.
  (When $j=0$, we identify $\mu_2=[d,0]$ with $\mu_2=[d]$.)
  The functions $a(\mu)$ and $a^*(\mu)$ in~\eqref{E:aa*} are:
  \begin{equation*}
    a(\mu) = j\qquad\textup{and}\qquad
    a^*(\mu) = \binom{d-j}{2} + \binom{j}{2}
    \quad\textup{for $0\le j\le e_2$.}
  \end{equation*}
  Hence, by Proposition \ref{prop:exponent},
  \[
  e_\mu=\binom{d}{2} + \binom{d-j}{2} + \binom{j}{2} - j
  = d^2 - d + j^2 - jd - j.
  \]
  This completes Step 2 of Algorithm 2.
    
  In Step 3, the length of the longest word $\ell$ in $W_J = \Sym(e_1) \times \Sym(e_2)$ is
  $\binom{e_1}{2} + \binom{e_2}{2}$.
  Thus $\ell = \frac{d^2-d-2e_1e_2}{2}$ and, by Step 4, the
  eigenvalue corresponding to $\chi_\mu$ is~$\pm q^{m_j}$~where
  \[
  m_j = \frac{e_\mu}{2} - \ell = \frac{j^2 - jd - j +2e_1e_2}{2}
  = e_1e_2 - \frac{j(d+1-j)}{2}. 
  \]
\end{proof}

\section{The Density Bound}\label{S3}

We state a version of the expander mixing lemma for regular,
bipartite graphs. This is stated in~\cite[Theorem~5.1]{Haemers1995} using the
language of block designs. Its proof is short,
so we include it here.
Given a graph $\Gamma$, we write $E(Y_1, Y_2)$ for the number of edges
between subsets $Y_1$ and $Y_2$ of the set of vertices of $\Gamma$.

\begin{Lemma}\label{lem:eml}
  Let $\Gamma$ be a $k$-regular bipartite graph with vertex set
  $X_1 \dcup X_2$. 
  Let $Y_i \subseteq X_i$ with $\alpha_i \coloneq |Y_i|/|X_i|$ for $i \in \{1,2\}$. 
  Suppose that the eigenvalues of the adjacency matrix of $\Gamma$ are 
  $\lambda_1\ge\lambda_2\ge\cdots\ge\lambda_{|X_1|+|X_2|}$.  Then
  \[
    \left|\frac{E(Y_1, Y_2)}{E(X_1, X_2)}-\alpha_1 \alpha_2 \right|
    \leq \frac{\lambda_2}{k} \sqrt{\alpha_1 \alpha_2 (1-\alpha_1) (1-\alpha_2)}.
  \]
\end{Lemma}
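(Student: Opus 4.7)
The plan is to exploit the block structure of the adjacency matrix and reduce the inequality to an operator-norm bound on the orthogonal complement of the trivial eigendirections. Because $\Gamma$ is bipartite with parts $X_1, X_2$, its adjacency matrix has the form $A=\left[\begin{smallmatrix}0&B\\B^T&0\end{smallmatrix}\right]$ for some $0/1$-matrix $B$ of size $|X_1|\times|X_2|$, and the number of edges between subsets is $E(Y_1,Y_2)=\mathbf{1}_{Y_1}^T B \,\mathbf{1}_{Y_2}$, where $\mathbf{1}_{Y_i}\in\mathbb{R}^{X_i}$ is the indicator vector of $Y_i$. The $k$-regularity gives $B\mathbf{1}_{X_2}=k\mathbf{1}_{X_1}$ and $B^T\mathbf{1}_{X_1}=k\mathbf{1}_{X_2}$; in particular $|X_1|=|X_2|$ and $E(X_1,X_2)=k|X_1|$.

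First I would decompose $\mathbf{1}_{Y_i}=\alpha_i\mathbf{1}_{X_i}+v_i$ with $v_i\perp\mathbf{1}_{X_i}$. Expanding $\mathbf{1}_{Y_1}^T B\,\mathbf{1}_{Y_2}$ yields four terms: the leading term is $\alpha_1\alpha_2\,\mathbf{1}_{X_1}^T B\,\mathbf{1}_{X_2}=\alpha_1\alpha_2\,E(X_1,X_2)$; the two cross terms vanish, because $\mathbf{1}_{X_1}^T B v_2=k\,\mathbf{1}_{X_2}^T v_2=0$ and similarly $v_1^T B\,\mathbf{1}_{X_2}=0$; what remains is the error term $v_1^T B v_2$. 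Dividing by $E(X_1,X_2)$ then reduces the claim to the inequality
\[
  |v_1^T B v_2|\le \lambda_2\sqrt{\alpha_1(1-\alpha_1)\alpha_2(1-\alpha_2)}\,|X_1|.
\]

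Second, I would bound the left-hand side via singular values. The singular values of $B$ are exactly the absolute values of the eigenvalues of $A$, since $A^2=\left[\begin{smallmatrix}BB^T&0\\0&B^TB\end{smallmatrix}\right]$. Because $\Gamma$ is bipartite, the spectrum of $A$ is symmetric about $0$, so the eigenvalues of $A$ in decreasing order are $\lambda_1\ge\lambda_2\ge\cdots\ge-\lambda_2\ge-\lambda_1$ (up to the trivial $0$'s), with $\lambda_1=k$ and top singular vectors proportional to $\mathbf{1}_{X_1}$ and $\mathbf{1}_{X_2}$. Consequently the operator norm of $B$ restricted to $\mathbf{1}_{X_1}^\perp\times\mathbf{1}_{X_2}^\perp$ is the second singular value, which equals $\lambda_2$. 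Cauchy–Schwarz then yields $|v_1^T B v_2|\le\lambda_2\|v_1\|\|v_2\|$.

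Third, I would compute $\|v_i\|^2$ by direct enumeration: $v_i$ takes value $1-\alpha_i$ on $Y_i$ and $-\alpha_i$ on $X_i\setminus Y_i$, so $\|v_i\|^2=\alpha_i|X_i|(1-\alpha_i)^2+(1-\alpha_i)|X_i|\alpha_i^2=\alpha_i(1-\alpha_i)|X_i|$. Substituting and using $|X_1|=|X_2|$ produces the required bound. I do not anticipate any serious obstacle: the one point requiring care is the identification of the second singular value of $B$ with the second eigenvalue $\lambda_2$ of $A$, which hinges on the bipartite symmetry of the spectrum and on identifying the top singular directions as the constant vectors.
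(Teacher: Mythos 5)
Your proof is correct, but it takes a genuinely different route from the paper's. You use the orthogonal decomposition $\mathbf{1}_{Y_i}=\alpha_i\mathbf{1}_{X_i}+v_i$ together with the singular value decomposition of the biadjacency block $B$, whereas the paper follows Haemers and applies eigenvalue interlacing to the $4\times 4$ quotient matrix of the partition $Y_1\dcup(X_1\setminus Y_1)\dcup Y_2\dcup(X_2\setminus Y_2)$, computing its characteristic polynomial explicitly (by computer) to identify its nontrivial eigenvalues as $\pm\mu$ with $\mu=|E-D\alpha_1\alpha_2|\big/\bigl(|X_1|\sqrt{\alpha_1\alpha_2(1-\alpha_1)(1-\alpha_2)}\bigr)$, and then invoking $\mu_2\le\lambda_2$. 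Your argument is the standard ``expander mixing'' computation; it is arguably cleaner, needs no determinant computation, and handles the boundary cases $\alpha_i\in\{0,1\}$ automatically (then $v_i=0$), whereas the paper treats them separately. The one point you rightly flag --- that the operator norm of $B$ restricted to $\mathbf{1}_{X_1}^{\perp}\times\mathbf{1}_{X_2}^{\perp}$ is at most $\lambda_2$ --- does hold even when $\Gamma$ is disconnected and $k$ is a repeated singular value: since $B\mathbf{1}_{X_2}=k\mathbf{1}_{X_1}$ and $B^{T}\mathbf{1}_{X_1}=k\mathbf{1}_{X_2}$, one may choose an SVD of $B$ whose first singular pair is $\bigl(\mathbf{1}_{X_1}/\sqrt{|X_1|},\,\mathbf{1}_{X_2}/\sqrt{|X_2|}\bigr)$, and then $v_1^{T}Bv_2=\sum_{i\ge 2}\sigma_i(v_1^{T}u_i)(w_i^{T}v_2)$ is bounded by $\sigma_2\|v_1\|\,\|v_2\|=\lambda_2\|v_1\|\,\|v_2\|$ via Cauchy--Schwarz. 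Both routes yield exactly the stated bound.
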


Since the graph $\Gamma$ has $|X_1|k = k|X_2|$ edges by $k$-regularity,
it follows that $|X_1| = |X_2|$.
Our proof follows \cite{Haemers1995} and uses {\it interlacing\,}:
Recall that 
the {\it quotient matrix} $B = (b_{ij})$ of a partition
$P_1 \dcup \cdots \dcup P_m$ of the vertex set of $\Gamma$ into $m$ parts
is an $(m\times m)$-matrix where $b_{ij} = E(P_i, P_j)/|P_i|$.
The eigenvalues of $B$ {\it interlace} those of the adjacency
matrix $A$ of $\Gamma$ by~\cite[Theorem~2.1]{Haemers1995},
that is if  the spectrum of $A$ is
$\lambda_1 \geq \lambda_2 \geq \cdots \geq \lambda_n$
and the spectrum of $B$ is $\mu_1 \geq \mu_2 \geq \cdots \geq \mu_m$,
then $\lambda_i \geq \mu_i \geq \lambda_{n-m+i}$ holds
for all $i \in \{ 1, \dots, m \}$.

\begin{proof}[Proof of Lemma~\ref{lem:eml}.]
  If $\alpha_1=0$, then $Y_1$ is empty. Hence $E(Y_1,Y_2)=0$ and the bound holds
  with equality. If $\alpha_1=1$, then $Y_1=X_1$ and by $k$-regularity
  \[
  \frac{E(Y_1,Y_2)}{E(X_1,X_2)}=\frac{k|Y_2|}{k|X_2|}=\alpha_2,
  \]
  and again the bound holds with equality. Similar arguments
  show that the bound  holds for $\alpha_2\in\{0,1\}$.
  Henceforth assume that $0<\alpha_1,\alpha_2<1$. 
  
  Write $D \coloneq E(X_1, X_2)$ and $E \coloneq E(Y_1, Y_2)$.
  Let $B=(b_{i,j})$ be the quotient matrix relative to the partition
  $Y_1 \dcup (X_1 \setminus Y_1) \dcup Y_2 \dcup (X_2 \setminus Y_2)$
  of the vertex set of~$\Gamma$. Then
  \begin{align*}
  b_{1,3}+b_{1,4}&=\frac{E(Y_1,X_2)}{|Y_1|}=k,\ 
  b_{1,3}=\frac{E(Y_1,Y_2)}{|Y_1|}=\frac{E}{\alpha_1|X_1|},\quad
  \textup{ and similarly}\\
  b_{2,3}+b_{2,4}&=\frac{E(X_1 \setminus Y_1,X_2)}{|X_{1} \setminus Y_{1}|}=k,\\ 
  b_{2,3}&=\frac{E(X_1 {\setminus} Y_1,Y_2)}{|X_{1} {\setminus} Y_{1}|}=
    \frac{E(X_1,Y_2){-}E}{(1-\alpha_1) |X_1|}=\frac{\alpha_2 D {-} E}{(1-\alpha_1) |X_1|}.
  \end{align*}
  Thus
  \begin{equation}\label{E:B}
      \renewcommand{\arraystretch}{1.4}
    B = \begin{pmatrix}
      0 & 0 & 
      \frac{E}{\alpha_1 |X_1|} 
      & k {-} \frac{E}{\alpha_1 |X_1|}\\
      0 & 0 & 
      \frac{\alpha_2 D - E}{(1-\alpha_1) |X_1|} &
      k {-} \frac{\alpha_2 D - E}{(1-\alpha_1) |X_1|} \\
      \frac{E}{\alpha_2 |X_2|} 
      & k {-} \frac{E}{\alpha_2 |X_2|} & 0 & 0 \\
      \frac{\alpha_1 D - E}{(1-\alpha_2) |X_2|} &
      k {-} \frac{\alpha_1 D - E}{(1-\alpha_2) |X_2|} & 0 & 0 
    \end{pmatrix}.
  \end{equation}

  Set $\delta=|X_1||X_2|\alpha_1\alpha_2(1-\alpha_1)(1-\alpha_2)$.
  Using a computer, we find that
  \[
  \det(tI - B) = (t^2-k^2)(t^2-\gamma^2/\delta)
  \qquad\textup{where $\gamma=E-D\alpha_1 \alpha_2$}.
  \]
  As $|X_1|=|X_2|$, the eigenvalues $\mu_1\ge\mu_2\ge\mu_3\ge\mu_4$
  of $B$  equal $\pm k,\pm\mu$  where
  \vskip-1mm
  \[
  \mu\coloneq\frac{\left|E-D\alpha_1 \alpha_2\right|}{ |X_1| \sqrt{\alpha_1 \alpha_2 (1-\alpha_1) (1-\alpha_2)}}.
  \]
  However, $\Gamma$ is $k$-regular, so its largest eigenvalue $\lambda_1$ is $k$
  by \cite[Proposition~1.3.8]{BH2012}. It follows from interlacing
  that $\mu_1=k$ and $\mu_2=\mu$.
  In addition, interlacing implies that $\mu_2\le\lambda_2$, that is
  \[
    \frac{\left|E(Y_1, Y_2) - E(X_1, X_2) \alpha_1 \alpha_2\right|}
   {|X_1| \sqrt{ \alpha_1 \alpha_2 (1-\alpha_1) (1-\alpha_2)}}
    \leq \lambda_2.
  \]
  Using $|X_1| = E(X_1, X_2)/k$ proves the assertion.
\end{proof}

\begin{proof}[Proof of Proposition~\ref{lem:density}]
  For $i\in\{1,2\}$, let $X_i$ denote the set of $e_i$-subspaces of~$V$.
  It follows from Proposition~\ref{lem:ev} that $\lambda_1=q^{e_1e_2}=k$
  and $\lambda_2=q^{e_1e_2-d/2}$.
Taking $\Gamma = \Gamma_{e_1,e_2}$ 
in Lemma~\ref{lem:eml} gives
\begin{align*}
 \frac{E(Y_1, Y_2)}{E(X_1, X_2)} \geq 
    \alpha_1 \alpha_2 - 
    \sqrt{\alpha_1\alpha_2 (1-\alpha_1)(1-\alpha_2)} q^{-\frac{d}{2}}.
\end{align*}
Since $E(X_1, X_2) = k \cdot |X_1|$, we have
\begin{align*}
 \frac{E(Y_1, Y_2)}{|Y_1| \cdot |Y_2|} 
 &= \frac{|X_1|^2}{|Y_1| \cdot |Y_2|} \cdot \frac{E(Y_1, Y_2)}{|X_1|^2}
 = (\alpha_1 \alpha_2)^{-1} \cdot \frac{k}{|X_1|} \cdot \frac{E(Y_1, Y_2)}{E(X_1, X_2)}\\
 &\geq \frac{k}{|X_1|} 
 \cdot \left(1 - \sqrt{\tfrac{(1-\alpha_1)(1-\alpha_2)}{\alpha_1\alpha_2}} q^{-\frac{d}{2}}\right)\\
 &=\frac{k}{|X_1|} 
 \cdot \left(1 - \sqrt{(\tfrac{1}{\alpha_1}-1)(\tfrac{1}{\alpha_2}-1)} q^{-\frac{d}{2}}\right).
\end{align*}
The first claimed inequality follows by rewriting $k/|X_1|$ because
\[
\frac{k}{|X_1|} =\frac{q^{e_1e_2}}{\gauss{d}{e_1}_q}
=\frac{\omega_q(e_1)\omega_q(e_2)}{\omega_q(e_1+e_2)}>\omega_q(e_2).
\]
The second inequality now follows from
$\sqrt{(\tfrac{1}{\alpha_1}-1)(\tfrac{1}{\alpha_2}-1)}\le \tfrac{1}{\alpha}-1$.
\end{proof}

\begin{Corollary}\label{lem:density2}
  The second bound in Proposition~$\ref{lem:density}$ implies that
  \smallskip
  \[
  \frac{|\{(S_1, S_2) \in Y_1 \times Y_2: S_1 \cap S_2 = 0\}|}{|Y_1| \cdot |Y_2|}  > \left(1-\frac{3}{2}q^{-1}\right)\left(1-\left(\frac{1}{\alpha}-1\right)q^{-d/2}\right).
  \]
\end{Corollary}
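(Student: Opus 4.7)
The plan is to reduce the corollary to the single arithmetic inequality
\[
\omega_q(e_2) \ge 1 - \frac{3}{2q} \quad \text{for every $e_2 \ge 1$ and every prime power $q \ge 2$.}
\]
Writing $B := 1 - (\frac{1}{\alpha}-1)q^{-d/2}$, Proposition~\ref{lem:density} asserts that the left-hand side of the claimed bound strictly exceeds $\omega_q(e_2) \cdot B$. If $B \ge 0$, then multiplying the displayed inequality by $B$ (and using that $1 - \frac{3}{2q} > 0$ whenever $q \ge 2$) yields exactly the corollary. If $B < 0$, then $(1 - \frac{3}{2q}) \cdot B < 0$ is dominated by the non-negative proportion on the left and the corollary holds trivially. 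So the whole statement reduces to the above one-parameter bound on $\omega_q(e_2)$.

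To establish this bound, I would first invoke monotonicity: since each factor $1 - q^{-i}$ lies in $(0, 1)$, the sequence $\omega_q(e_2)$ is strictly decreasing in $e_2$, and therefore $\omega_q(e_2) \ge \omega_q(\infty) := \prod_{i \ge 1}(1 - q^{-i})$. Euler's pentagonal number theorem then expands this infinite product as
\[
\omega_q(\infty) = 1 - q^{-1} - q^{-2} + q^{-5} + q^{-7} - q^{-12} - q^{-15} + q^{-22} + q^{-26} - \cdots,
\]
with exponents running through the generalized pentagonal numbers. Beyond the first three terms, the tail groups naturally into consecutive pairs $(q^{-5}+q^{-7})$, $(q^{-12}+q^{-15})$, $(q^{-22}+q^{-26})$, $\ldots$, of alternating signs and strictly decreasing magnitudes whenever $q \ge 2$. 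Leibniz's alternating-series test then makes the tail non-negative, so $\omega_q(\infty) \ge 1 - q^{-1} - q^{-2}$. The final step $1 - q^{-1} - q^{-2} \ge 1 - \frac{3}{2q}$ is equivalent to $q^{-1} \le \frac{1}{2}$, i.e. $q \ge 2$, which always holds.

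The only genuine obstacle is the non-negativity of the pentagonal tail, and this dissolves into a one-line check that paired magnitudes $q^{-a_k} + q^{-b_k}$ decrease in $k$. A cruder route via $\prod_i(1 - q^{-i}) \ge 1 - \sum_{i \ge 1} q^{-i} = 1 - (q-1)^{-1}$ already suffices when $q \ge 3$, so the pentagonal expansion is only genuinely needed at $q = 2$, where the numerical gap $\omega_2(\infty) \approx 0.289 > \frac{1}{4}$ is comfortable but not visible to the cruder bound.
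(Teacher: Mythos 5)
Your proof is correct and follows essentially the same route as the paper: both reduce the corollary to the chain $\omega_q(e_2)>\omega_q(\infty)>1-q^{-1}-q^{-2}\ge 1-\frac{3}{2q}$ (the last step being exactly $q\ge 2$) and then feed this into the second bound of Proposition~\ref{lem:density}. The only differences are cosmetic: you prove $\omega_q(\infty)>1-q^{-1}-q^{-2}$ directly via Euler's pentagonal number theorem where the paper simply cites \cite[Lemma~3.5]{NP1995}, and you add an explicit (correct, if largely unnecessary) case split on the sign of $1-(\frac{1}{\alpha}-1)q^{-d/2}$.
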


\begin{proof}
  Since $\omega_q(\infty)>1-q^{-1}-q^{-2}$ by \cite[Lemma~3.5]{NP1995},
  it follows that
  \[\omega_q(e_2)>\omega_q(\infty)>1-q^{-1}-q^{-2}\ge 1-\frac{3}{2q}.\]
  The result now follows from Proposition~\ref{lem:density}.
\end{proof}

\section{Proof in the orthogonal case} \label{sec:GNP}

In this section, we prove the orthogonal bound in Theorem~\ref{T:OSpU}.

\begin{Theorem}\label{lem:main}
  Suppose that $e_1,e_2 \geq 2$ are even and $V=(\F_q)^{e_1+e_2}$ is
  an $(e_1+e_2)$-dimensional vector space
  equipped with a non-degenerate quadratic form of type $\eps\in\{-,+\}$.
  For $i\in\{1,2\}$ and for $\sigma_i \in \{ -, + \}$,
  let $Y_i$ denote the set of all non-degenerate $e_i$-spaces of
  type $\sigma_i$.
  The proportion of pairs $(S_1, S_2) \in Y_1 \times Y_2$ for which
  $S_1\cap S_2=\{0\}$ is at least $1-\frac{3}{2q}$ for all $e_1,e_2\ge2$,
  $\eps,\sigma_1,\sigma_2\in\{-,+\}$ and all prime-powers $q\ge2$.
\end{Theorem}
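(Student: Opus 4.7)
The plan is to apply Proposition~\ref{lem:density} to the graph $\Gamma_{e_1,e_2}$, taking $Y_i$ to be the set of non-degenerate $e_i$-subspaces of type $\sigma_i$. The task then splits into two pieces: (i) compute or suitably lower-bound the densities $\alpha_i = |Y_i|/\gauss{d}{e_i}_q$, and (ii) verify that the resulting inequality exceeds $1-\tfrac{3}{2q}$ uniformly in the remaining parameters.

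For (i), I would invoke the classical enumeration of non-degenerate subspaces of a fixed type in a non-degenerate orthogonal space. Such counts are products of $q$-binomial coefficients and orders of finite orthogonal groups; the explicit formulas needed here appear in~\cite{GNP2022}. Dividing by $\gauss{d}{e_i}_q$ gives a closed form for $\alpha_i$ as a product of $\omega_q$-type factors with a numerical prefactor depending on $(\eps,\sigma_i)$. For large $q$ one has $\alpha_i\to\tfrac{1}{2}$, reflecting the near-equidistribution of non-degenerate even-dimensional subspaces into plus and minus types; in particular $1/\alpha_i-1 = O(1)$, independently of $q$ and the $e_i$.

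For (ii), I would substitute these $\alpha_i$ into the first (sharper) inequality of Proposition~\ref{lem:density}, namely
\[
\frac{|\{(S_1,S_2)\in Y_1\times Y_2: S_1\cap S_2=\{0\}\}|}{|Y_1|\cdot|Y_2|}
\geq \frac{\omega_q(e_1)\omega_q(e_2)}{\omega_q(d)}\left(1-\sqrt{(\tfrac{1}{\alpha_1}-1)(\tfrac{1}{\alpha_2}-1)}\,q^{-d/2}\right).
\]
Since $d=e_1+e_2\geq 4$, the correction $q^{-d/2}\leq q^{-2}$ is an order of magnitude smaller than the slack of $\tfrac{3}{2q}$ allowed on the right-hand side. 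The prefactor $\omega_q(e_1)\omega_q(e_2)/\omega_q(d)$ is strictly larger than $\omega_q(e_2)>1-q^{-1}-q^{-2}$, which already exceeds $1-\tfrac{3}{2q}$; the surplus should be enough to absorb the $q^{-d/2}$ correction.

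The main obstacle is the case analysis required to verify this uniformly over the type combinations $(\eps,\sigma_1,\sigma_2)\in\{-,+\}^3$ and all prime powers $q\geq 2$; the value $q=2$, which defeated the double-counting approach of~\cite{GNP2022}, is the most delicate. For $q\geq 3$ I expect the estimate to follow cleanly from $\omega_q(\infty)>1-q^{-1}-q^{-2}$ together with the $O(q^{-2})$ size of the correction term. For $q=2$ the inequality should still hold, but may require direct numerical substitution in the finitely many small-dimensional sub-cases where the $\omega_q$-factors are farthest from $1$. The conceptual gain over~\cite{GNP2022} is that the expander-mixing bound depends only on $\alpha_1,\alpha_2$, which makes the analysis uniform and removes the $q=2$ obstruction.
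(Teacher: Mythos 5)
Your overall strategy is the same as the paper's: compute $\alpha_i$ from the orbit--stabilizer count of non-degenerate $e_i$-subspaces of type $\sigma_i$, feed these into Proposition~\ref{lem:density}, and clean up small cases by computer. The gap is in your quantitative step (ii), and it is not cosmetic. First, $\alpha_i$ is \emph{not} uniformly close to $\tfrac12$: the paper's computation gives $\alpha_1=\lambda(\sigma_1,\eps)B_q(e_1,e_2)/B_{q^2}(m_1,m_2)$ with $\lambda(\sigma_1,\eps)=\frac{(1+\sigma_1q^{-m_1})(1+\eps\sigma_1q^{-m_2})}{2(1+\eps q^{-m_1-m_2})}$, and for minus-type subspaces in small dimension over $\F_2$ (e.g.\ $q=2$, $e_1=e_2=2$, $\sigma_1=-$, $\eps=+$) one gets $\alpha_1\approx 0.057$, so $\tfrac1{\alpha_1}-1\approx 16.5$ and the correction term $\sqrt{(\tfrac1{\alpha_1}-1)(\tfrac1{\alpha_2}-1)}\,q^{-d/2}$ exceeds $4$; the bound of Proposition~\ref{lem:density} is then negative and says nothing. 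Second, even in the favourable situation $\alpha_i\approx\tfrac12$, your claim that $q^{-d/2}\le q^{-2}$ is ``an order of magnitude smaller than the slack'' is false for $q=2$: the available slack is not $\tfrac{3}{2q}$ but $B_q(e_1,e_2)-(1-\tfrac{3}{2q})$, which for $q=2$ is at most about $\omega_2(\infty)-\tfrac14\approx 0.039$, while the correction multiplies the prefactor by $(1-q^{-2})=\tfrac34$ when $d=4$ --- so the bound drops below $\tfrac14$ even in the ``generic'' case.

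Because of this, the actual proof cannot proceed as you sketch. The paper must (a) evaluate the Proposition's bound exactly by computer for all $q\le 5$ and $m_1+m_2\le 12$ with $m_i=e_i/2$, discovering seven parameter sets (all $q\le5$ with $m_1=m_2=1$, plus $(q,m_2,m_1)\in\{(2,1,2),(2,1,3),(2,2,2)\}$) where the bound \emph{fails}; these are settled by an exact count of the pairs in {\sc GAP/FinInG}, not by substituting numbers into the inequality, so your hedge about ``direct numerical substitution'' does not cover them; and (b) for the analytic range it must restrict to $d\ge 14$ when $q\le 5$, bound $\lambda(-,+)^{-1}$ and $B_{q^2}(m_1,m_2)\le B_{q^2}(1,6)$ explicitly, and invoke the sharper estimate $\omega_q(\infty)>1-q^{-1}-q^{-2}+q^{-5}$ to squeeze past $1-\tfrac{3}{2q}$. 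You have the right framework, but the uniform ``$O(1)$ correction'' heuristic is exactly where the $q=2$ orthogonal case remains delicate, and the worst-case type combination $(\sigma_1,\eps)=(-,+)$ needs to be tracked explicitly rather than absorbed into a constant.
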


\begin{proof}
  Let $e_1=2m_1$, $e_2=2m_2$ and let $V=(\F_q)^d$ be a non-degenerate
  orthogonal space of type $\eps\in\{-,+\}$ where $d=e_1+e_2$.
  We will assume, without loss of generality, that $e_2\leq e_1$
  and hence that $m_2\leq m_1$.
  Denote the isometry  group of $V$ by $\GO_d^\eps(q)$.
  We use the formula for $|\GO^\sigma_{2m}( q)|$ in ~\cite[p.\,141]{Taylor91},
  where $\sigma\in\{+,-\}$ is identified with $1,-1$, respectively. Since
  $q^{2m}-1=(q^m-\sigma)(q^m+\sigma)$, we have
  \[
  |\GO^\sigma_{2m}(q)| = 2 q^{m(m-1)}(q^m - \sigma) \prod_{i=1}^{m-1} (q^{2i}-1)
  =\frac{2 q^{m(2m-1)}}{1+\sigma q^{-m}}\prod_{i=1}^m(1-q^{-2i}).
  \]
  Hence $|\GO^\sigma_{2m}( q)|\sim 2q^{m(2m-1)}$ as $q\to\infty$.
  Recall that $\omega_{q^2}(m)=\prod_{i=1}^m(1-q^{-2i})$.
  
  Let $Y_1$ denote the set of non-degenerate $e_1$-spaces in $V$ of
  type $\sigma_1$. The stabilizer of $S_1\in Y_1$ in $\GO^{\eps}_{d}(q)$ is
  $\GO^{\sigma_1}_{e_1}(q) \times \GO^{\eps\sigma_1}_{e_2}(q)$
  since $S_1^\perp$ has type $\eps\sigma_1$
  by~\cite[Lemma 2.5.11(ii)]{KL}.
  It follows from the Orbit-Stabilizer Lemma that
  \begin{align*}
    |Y_1|=\frac{|\GO^\eps_d(q)|}{|\GO^{\sigma_1}_{e_1}(q){\times}\GO^{\eps\sigma_1}_{e_2}(q)|}
    &=\frac{q^{e_1e_2}(1{+}\sigma_1 q^{-m_1})(1{+}\eps\sigma_1 q^{-m_2})}{2(1+\eps q^{-m_1-m_2})}\frac{\omega_{q^2}(m_1+m_2)}{\omega_{q^2}(m_1)\omega_{q^2}(m_2)}.
  \end{align*}
  Hence $|Y_1|\sim\frac{1}{2}q^{e_1e_2}$ as $q\to\infty$.
  We shall write
  \[
  \gauss{d}{e_1}_q=\frac{q^{e_1e_2}}{B_{q}(e_1,e_2)}
  \quad\textup{where}\quad
  B_{q}(e_1,e_2)\coloneq\frac{\omega_q(e_1)\omega_q(e_2)}{\omega_q(e_1+e_2)}.
  \]
  Then
  \[
  |Y_1|=\frac{q^{e_1e_2}\lambda(\sigma_1,\eps)}{B_{q^2}(m_1,m_2)}
  \quad\textup{where}\quad \lambda(\sigma_1,\eps)
  =\frac{(1+\sigma_1 q^{-m_1})(1+\eps\sigma_1 q^{-m_2})}{2(1+\eps q^{-m_1-m_2})}.
  \]
  Hence
  \[
    \alpha_1=\frac{|Y_1|}{\gauss{d}{e_1}_q}
     =\frac{\lambda(\sigma_1,\eps)B_{q}(e_1,e_2)}{B_{q^2}(m_1,m_2)}.
  \]
  Proposition~\ref{lem:density} gives the lower bound
  \[
  \frac{E(Y_1,Y_2)}{|Y_1||Y_2|}
  \ge B_{q}(e_1,e_2)\left(1-\sqrt{\left(\frac{1}{\alpha_1}-1\right)\left(\frac{1}{\alpha_2}-1\right)}q^{-d/2}\right)
  \]
  A computer program~\cite{FI} checks that the above bound is greater
  than $1-\frac{1.5}{q}$ for all $q\le5$ and $1\le m_2\le m_1\le 6$
  except when $q\le5$ and $m_1=m_2=1$, or $(q,m_2,m_1)$
  equals $(2,1,2)$, $(2,1,3)$, $(2,2,2)$.
  For these seven exceptions, we used the  {\sc GAP} package
  {\sc FinInG}~\cite{FinIng} to do an exact count.
  The {\sc GAP/FinIng} code~\cite{FI} 
  verifies that the lower bound $1-\frac{1.5}{q}$ holds in these cases. 
  Thus when $q\le5$, we will henceforth assume that $m_1+m_2\ge7$,
  and hence that $d=2m_1+2m_2\ge14$.
  
  Observe now that $\lambda(\sigma_1,\eps)\ge\lambda(-,+)$.
  Take a lower bound $\alpha_1\ge\alpha$ and $\alpha_2\ge\alpha$ where
  $\alpha\coloneq\lambda(-,+)B_{q}(e_1,e_2)B_{q^2}(m_1,m_2)^{-1}$.
  Proposition~\ref{lem:density} gives the lower bound
  \begin{align}\label{E:bestLB}
  \frac{E(Y_1,Y_2)}{|Y_1||Y_2|}
  &\ge B_{q}(e_1,e_2)\left(1-\left(\frac{1}{\alpha}-1\right)q^{-d/2}\right)
  \nonumber\\
  &=B_{q}(e_1,e_2)\left(1+q^{-d/2}\right)-\lambda(-,+)^{-1}B_{q^2}(m_1,m_2)q^{-d/2}.
  \end{align}

  We next consider the case $q\le5$ and $m_1+m_2\ge7$. Since $B_{q^2}(m_1,m_2)$ is
  a decreasing function of $m_2$, it follows that
  \[
  B_{q^2}(m_1,m_2)\le B_{q^2}(7-m_2,m_2)\le B_{q^2}(1,6).
  \]
  Furthermore,   $B_q(e_1,e_2)\ge\omega_q(e_1)>\omega_q(\infty)$ and
  $1+q^{-d/2}>1$ so
  \[
  \frac{E(Y_1,Y_2)}{|Y_1||Y_2|}
  >\omega_q(\infty)-\lambda(-,+)^{-1}B_{q^2}(1,6) q^{-7}.
  \]
  Similar reasoning gives
  \begin{align*}
  \lambda(-,+)^{-1}=\frac{2(1+q^{-m_1-m_2})}{(1-q^{-m_1})(1-q^{-m_2})}
  &\le\frac{2(1+q^{-7})}{(1-q^{-m_1})(1-q^{-(7-m_1)})}\\
  &\le\frac{2(1+q^{-7})}{(1-q^{-1})(1-q^{-6})}.
  \end{align*}
  Using the more accurate lower bound $\omega_q(\infty)>1-q^{-1}-q^{-2}+q^{-5}$
  from \cite[Lemma~3.5]{NP1995}, one can check by computer that
  \[
    \frac{E(Y_1,Y_2)}{|Y_1||Y_2|}  >1-q^{-1}-q^{-2}+q^{-5}-\frac{2(1+q^{-7})}{(1-q^{-1})(1-q^{-6})}B_{q^2}(1,6)q^{-7}>1-\frac{3}{2q}
  \]
  holds for $q\le5$.
  
  Finally, suppose that $q\ge7$ and $1\le m_2\le m_1$ holds. When
  $m_1=m_2=1$, it follows from~\eqref{E:bestLB} and $1+q^{-d/2}>1$ that
  \[
  \frac{E(Y_1,Y_2)}{|Y_1||Y_2|}
  >\frac{1}{(1+q^{-1}+q^{-2})(1+q^{-2})}-\frac{2q^{-2}}{(1-q^{-1})^2}.
  \]
  This is bound is dominated by the first term and is a
  decreasing function of $q$. Hence the bound is
  greater than $1-\frac{1.5}{q}$ for all $q\ge7$.
  It remains to consider
  the case $m_1+m_2\ge3$ and hence $d\ge6$.
  Arguing as above, and using $q\ge7$, gives
  \[
  \frac{E(Y_1,Y_2)}{|Y_1||Y_2|}
  >1-q^{-1}-q^{-2}+q^{-5}-\frac{2(1+q^{-3})}{(1{-}q^{-1})(1{-}q^{-2})}B_{q^2}(1,2)q^{-3}>1-\frac{3}{2q}.
  \]
  Thus in all cases the bound $1-\frac{3}{2q}$ holds, as claimed.
\end{proof}

\section{Proof in the symplectic and unitary cases} \label{sec:GNP2}

In this section, we prove the symplectic and unitary bounds
in Theorem~\ref{T:OSpU}.

\begin{Theorem}\label{T:Sp}
  Suppose that $e_1,e_2 \geq 2$ are even and $V=(\F_q)^{e_1+e_2}$ is
  an $(e_1+e_2)$-dimensional symplectic space.
  For $i\in\{1,2\}$ let $Y_i$ denote the set of all non-degenerate
  $e_i$-spaces of $V$.
  The proportion of pairs $(S_1, S_2) \in Y_1 \times Y_2$ for which
  $S_1\cap S_2=\{0\}$ is at least $1-\frac{10}{7q}$ for all $e_1,e_2$ and all prime-powers $q\ge2$.
\end{Theorem}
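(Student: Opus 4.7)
The plan is to mirror the proof of Theorem~\ref{lem:main} but with the simpler arithmetic of the symplectic group. Write $e_i = 2m_i$ and $d = 2(m_1+m_2)$. By Witt's theorem $\Sp_d(q)$ acts transitively on $Y_1$, and the stabilizer of $S_1 \in Y_1$ is $\Sp_{e_1}(q) \times \Sp_{e_2}(q)$ because $S_1^\perp$ is a non-degenerate complementary $e_2$-subspace. Using $|\Sp_{2m}(q)| = q^{m(2m+1)}\omega_{q^2}(m)$ together with Orbit--Stabilizer, the exponent collapses via $(m_1+m_2)(2m_1+2m_2+1) - m_1(2m_1+1) - m_2(2m_2+1) = 4m_1m_2 = e_1e_2$, yielding
\[
|Y_1| = \frac{|\Sp_d(q)|}{|\Sp_{e_1}(q)|\,|\Sp_{e_2}(q)|} = \frac{q^{e_1e_2}}{B_{q^2}(m_1,m_2)}.
\]
Dividing by $\gauss{d}{e_1}_q = q^{e_1e_2}/B_q(e_1,e_2)$ and using the manifest symmetry under $e_1 \leftrightarrow e_2$ gives $\alpha := \alpha_1 = \alpha_2 = B_q(e_1,e_2)/B_{q^2}(m_1,m_2)$. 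This is the key simplification over the orthogonal case: the absence of a type distinction removes the correction factor $\lambda(\sigma_1,\eps)$ and forces the two densities to coincide.

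Applying Proposition~\ref{lem:density} and rearranging exactly as in~\eqref{E:bestLB} produces the clean lower bound
\[
\frac{E(Y_1,Y_2)}{|Y_1|\,|Y_2|} \;\geq\; B_q(e_1,e_2)(1+q^{-d/2}) - B_{q^2}(m_1,m_2)\,q^{-d/2}.
\]
It remains to show this is at least $1-\tfrac{10}{7q}$, and this verification is the main obstacle. The strategy is a split on $q$: for $q \geq 4$, one combines the refined estimate $B_q(e_1,e_2) \geq \omega_q(\infty) > 1-q^{-1}-q^{-2}+q^{-5}$ from~\cite[Lemma~3.5]{NP1995} with the trivial bound $B_{q^2}(m_1,m_2) < 1$ and $q^{-d/2} \leq q^{-2}$ (since $d \geq 4$), so that the error is absorbed by the slack $\tfrac{3}{7q} - q^{-2}$ between $1-q^{-1}-q^{-2}$ and $1-\tfrac{10}{7q}$.

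For $q \in \{2,3\}$ together with small values of $m_1+m_2$ the uniform bound is insufficient, and these finitely many residual cases must be checked by direct computation in the style of the \textsc{GAP}/\textsc{FinInG} verification~\cite{FI} used in the orthogonal proof. The constant $\tfrac{10}{7}$ is essentially pinned down by the extremal case $(q,e_1,e_2) = (2,2,2)$: one computes $\alpha = 4/7$, and the right-hand side above evaluates to $\tfrac{13}{35}$, only marginally above the target $\tfrac{2}{7} = \tfrac{10}{35}$. Once $m_1+m_2$ is large enough for $q \in \{2,3\}$, the extra factor of $q^{-d/2}$ in the error term is easily absorbed via uniform estimates on $B_{q^2}(m_1,m_2)$ obtained from the identity $\omega_q(2m) = \omega_{q^2}(m)\prod_{i=1}^{m}(1-q^{-(2i-1)})$, exactly as the $m_1+m_2 \geq 7$ branch was handled in the orthogonal proof.
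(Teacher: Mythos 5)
Your proposal follows the paper's proof line for line: the same Orbit--Stabilizer computation giving $|Y_1|=q^{e_1e_2}/B_{q^2}(m_1,m_2)$, the same density $\alpha_1=\alpha_2=B_q(e_1,e_2)/B_{q^2}(m_1,m_2)$, the same application of Proposition~\ref{lem:density}, and the same three-way split (uniform bound for large $q$, uniform bound for large $d$, finite computation for the rest). There is, however, one concrete gap: your claim that the uniform estimate closes all of $q\ge4$ fails at $q=4$. Using $B_q(e_1,e_2)>1-q^{-1}-q^{-2}+q^{-5}$, $B_{q^2}(m_1,m_2)<1$ and $q^{-d/2}\le q^{-2}$, the inequality you need is $\tfrac{3}{7}q^{-1}+q^{-5}>2q^{-2}$, i.e.\ $\tfrac{3}{7}q+q^{-3}>2$, which holds for $q\ge5$ but fails for $q=4$ since $\tfrac{12}{7}+\tfrac{1}{64}\approx 1.73<2$. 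Numerically, at $(q,e_1,e_2)=(4,2,2)$ your uniform lower bound is at best $\omega_4(\infty)-4^{-2}<0.689-0.0625=0.627$, below the target $1-\tfrac{10}{28}\approx 0.643$; the sharper formula-based bound $B_4(2,2)(1+4^{-2})-B_{16}(1,1)\,4^{-2}\approx 0.703$ does clear the target, but only the explicit computation sees this. Since your computational branch is restricted to $q\in\{2,3\}$, the cases $q=4$ with small $d$ are covered by no branch of your argument. The fix is exactly what the paper does: reserve the uniform argument for $q\ge5$ and put $q\in\{2,3,4\}$ through the large-$d$/finite-check branch.

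Two smaller points. First, the residual small cases here need only the evaluation of the closed-form bound $B_q(e_1,e_2)(1+q^{-d/2})-B_{q^2}(m_1,m_2)q^{-d/2}$; the \textsc{FinInG}-style exact counts were required in the orthogonal case only because the formula-based bound failed for seven exceptional tuples, which does not happen here. Second, your computation $\alpha=\tfrac{4}{7}$ and bound $\tfrac{13}{35}$ for $(q,e_1,e_2)=(2,2,2)$ is correct, but that case does not pin down the constant: $\tfrac{13}{35}$ versus $\tfrac{10}{35}$ would tolerate $c=\tfrac{44}{35}\approx 1.257$. The binding constraint is $q=2$ with $d$ large, where the guaranteed bound tends to $\omega_2(\infty)\approx 0.2888$ and $1-\tfrac{c}{2}\le\omega_2(\infty)$ forces $c\ge 1.4224$, just under $\tfrac{10}{7}\approx 1.4286$.
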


\begin{proof}
  Let $e_1=2m_1$, $e_2=2m_2$ and let $V=(\F_q)^d$ be a non-degenerate
  symplectic space where $d=e_1+e_2$. As before, we shall assume, without loss of generality,
  that $2\le e_2\le e_1$ and hence that $1\le m_2\le m_1$. Let $m=m_1+m_2$. The isometry
  group $\Sp_{2m}(q)$ of $V$ has order
  $q^{m^2}\prod_{i=1}^{m} (q^{2i}-1)=q^{2m^2+m}\omega_{q^2}(m)$
  by~\cite[p.\,70]{Taylor91}.
  
  Let $Y_i$ denote the set of non-degenerate $e_i$-spaces in $V$.
  Clearly $|Y_1|=|Y_2|$.
  The stabilizer of $S_1\in Y_1$ in $\Sp(V)$ is
  $\Sp(S_1) \times \Sp(S_1^\perp)$.  Therefore
  \begin{align*}
    |Y_2|=|Y_1|=\frac{|\Sp_{e_1+e_2}(q)|}{|\Sp_{e_1}(q){\times}\Sp_{e_2}(q)|}
    &=\frac{q^{e_1e_2}\omega_{q^2}(m_1+m_2)}{\omega_{q^2}(m_1)\omega_{q^2}(m_2)}\\
    &=\frac{q^{e_1e_2}}{B_{q^2}(m_1,m_2)}\quad
    \text{where $B_q(e_1,e_2)=\frac{\omega_q(e_1)\omega_q(e_2)}{\omega_q(e_1+e_2)}$.}
  \end{align*}
  Since
  $|X_1|=\gauss{d}{e_1}_q=\gauss{d}{e_2}_q=|X_2|=q^{e_1e_2}/B_{q}(e_1,e_2)$,
  we have
  \[
  \alpha_1=\frac{|Y_1|}{|X_1|}
  =\frac{q^{e_1e_2}}{B_{q^2}(m_1,m_2)}\cdot\frac{B_{q}(e_1,e_2)}{q^{e_1e_2}}
  =\frac{B_q(e_1,e_2)}{B_{q^2}(m_1,m_2)}.
  \]
  Proposition~\ref{lem:density} gives the lower bound
  \begin{align*}\label{E:bestLB}
    \frac{E(Y_1,Y_2)}{|Y_1||Y_2|}
    &\ge  B_q(e_1,e_2)\left(1-\left(\frac{1}{\alpha_1}-1\right)q^{-d/2}\right)\\
    &\ge  B_q(e_1,e_2) -\left(B_{q^2}(m_1,m_2)-B_q(e_1,e_2)\right)q^{-d/2}\\
    &>  B_q(e_1,e_2) - B_{q^2}(m_1,m_2)q^{-d/2}.
  \end{align*}

  Note that $B_q(e_1,e_2)\ge\omega_q(e_1)>\omega_q(\infty)>1-q^{-1}-q^{-2}$
  by~\cite[Lemma~3.5]{NP1995}. Also, $1>B_{q^2}(m_1,m_2)$ and $d=e_1+e_2\ge4$.
  If $q\ge5$, we have
  \[
  \frac{E(Y_1,Y_2)}{|Y_1||Y_2|}\ge1-q^{-1}-q^{-2}-q^{-2}> 1-\frac{10}{7}q^{-1}.
  \]
  For $q\in\{2,3,4\}$ we have $\omega_2(\infty)>0.288$,
  $\omega_3(\infty)>0.56$ and $\omega_4(\infty)>0.688$. Thus
  when $q\in\{2,3,4\}$ and $d\ge20$, we have
  \[
  \frac{E(Y_1,Y_2)}{|Y_1||Y_2|}\ge\omega_q(\infty)-q^{-10}  >1-\frac{10}{7}q^{-1}.
  \]
  Finally, if $q\in\{2,3,4\}$ and $d=e_1+e_2<20$, then a computer
  program shows that the last inequality below is satisfied
  \[
  \frac{E(Y_1,Y_2)}{|Y_1||Y_2|}\ge B_q(e_1,e_2)(1+q^{-d/2}) - B_{q^2}(m_1,m_2)q^{-d/2}>1-\frac{10}{7}q^{-1}.
  \]
\end{proof}

\begin{Theorem}\label{T:U}
  Suppose $V=(\F_{q^2})^{e_1+e_2}$ is
  an $(e_1+e_2)$-dimensional hermitian space where $e_1,e_2 \geq 1$.
  For $i\in\{1,2\}$, let $Y_i$ denote the set of all non-degenerate
  $e_i$-spaces of $V$.
  The proportion of pairs $(S_1, S_2) \in Y_1 \times Y_2$ for which
  $S_1\cap S_2=\{0\}$ is at least $1-\frac{c}{q^2}$ where
  $c=2$ when $(e_1,e_2,q)=(1,1,2)$,
  $c=\frac{3}{2}$ when $\min\{e_1,e_2\}=1$ and $(e_1,e_2,q)\ne(1,1,2)$,
  and $c=1.26$ otherwise.
\end{Theorem}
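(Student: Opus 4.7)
The approach will follow the template of Theorems~\ref{lem:main} and~\ref{T:Sp}, specialised to the unitary case. The plan is to compute $\alpha_i = |Y_i|/|X_i|$ via orbit--stabilizer using the isometry group $\GU_d(q)$, feed the ratios into Proposition~\ref{lem:density} with the underlying field taken to be $\F_{q^2}$ (so that the exponent $-d/2$ there becomes $-d$), and then verify each of the three numerical bounds for $c$ claimed in the statement.

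Using $|\GU_n(q)| = q^{n^2}\omega^-_q(n)$ with $\omega^-_q(n) \coloneq \prod_{i=1}^n(1-(-q)^{-i})$, together with the fact that the $\GU_d(q)$-stabilizer of a non-degenerate $e_1$-subspace of $V$ is $\GU_{e_1}(q)\times\GU_{e_2}(q)$, the identity $d^2-e_1^2-e_2^2 = 2e_1 e_2$ and orbit--stabilizer yield
\[
|Y_1| = |Y_2| = \frac{q^{2e_1e_2}}{B^-_q(e_1,e_2)},
\qquad
B^-_q(e_1,e_2) \coloneq \frac{\omega^-_q(e_1)\omega^-_q(e_2)}{\omega^-_q(d)}.
\]
Since $|X_1|=|X_2|=\gauss{d}{e_1}_{q^2} = q^{2e_1e_2}/B_{q^2}(e_1,e_2)$, I obtain the clean formula $\alpha_1 = \alpha_2 = \alpha = B_{q^2}(e_1,e_2)/B^-_q(e_1,e_2)$, and the first bound of Proposition~\ref{lem:density} collapses (as $\alpha_1=\alpha_2$) to the master estimate
\[
\frac{E(Y_1,Y_2)}{|Y_1|\,|Y_2|} \ge B_{q^2}(e_1,e_2)(1+q^{-d}) - B^-_q(e_1,e_2)\,q^{-d}.
\]

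From here the three cases are handled in turn. For the generic regime $e_1,e_2\ge 2$ with $c=1.26$, I would bound $B_{q^2}(e_1,e_2)\ge\omega_{q^2}(\infty)>1-q^{-2}-q^{-4}$ via~\cite[Lemma~3.5]{NP1995}, control $B^-_q(e_1,e_2)$ by an explicit product estimate, and show the resulting inequality majorises $1-1.26\,q^{-2}$ for all but finitely many $(e_1,e_2,q)$; the finitely many remaining small cases would be verified by a \textsc{GAP}/\textsc{FinInG} computation in the spirit of~\cite{FI}. When $\min\{e_1,e_2\}=1$ with $(e_1,e_2,q)\ne(1,1,2)$, the same template gives the weaker $c=3/2$, the slackness in $\alpha$ coming from $|\GU_1(q)|=q+1$. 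The single exceptional point $(1,1,2)$ is then settled by a direct enumeration of non-isotropic $1$-subspaces of the hermitian plane over $\F_4$, producing exactly $c=2$.

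The main obstacle is controlling $B^-_q(e_1,e_2)$ uniformly at small $q$: the alternating signs in $\omega^-_q$ produce $(1+q^{-i})$ factors exceeding $1$, so the convenient inequality $B^-_q(e_1,e_2)\le 1$ used in the symplectic proof is \emph{not} available (for instance $B^-_2(2,2) = 16/15 > 1$). Consequently the asymptotic estimate must be supplemented by a somewhat longer list of computer-verified small cases, and the refined constant $1.26$ (as opposed to $3/2$) reflects exactly how much this inflation can be offset once both $e_i\ge 2$.
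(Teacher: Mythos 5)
Your treatment of the main case $e_1,e_2\ge2$ coincides with the paper's: the same orbit--stabilizer computation gives $\alpha_1=\alpha_2=B_{q^2}(e_1,e_2)/B_{-q}(e_1,e_2)$, the same master estimate $B_{q^2}(e_1,e_2)(1+q^{-d})-B_{-q}(e_1,e_2)q^{-d}$ follows from Proposition~\ref{lem:density} with $q$ replaced by $q^2$, and the paper then bounds $B_{q^2}\ge\omega_{q^2}(\infty)>1-q^{-2}-q^{-4}$, proves $B_{-q}(e_1,e_2)\le\frac{1+q^{-1}}{(1-q^{-4})(1-q^{-6})}$ by an explicit sign analysis (exactly the inflation issue you flag; your example $B_{-2}(2,2)=16/15$ is correct), and finishes with $q\ge4$ analytically, $q\in\{2,3\}$, $d\ge10$ analytically, and the finitely many remaining $(e_1,e_2,q)$ by direct evaluation of the explicit formula (a simple numerical check, not a \textsc{FinInG} enumeration --- that was only needed in the orthogonal case). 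Where you genuinely diverge is $\min\{e_1,e_2\}=1$: the paper explicitly abandons the eigenvalue method there (``Proposition~\ref{lem:density} gives poor bounds when $e_2=1$'') and instead overestimates the complementary proportion directly, bounding the proportion of non-degenerate $(d-1)$-spaces through a fixed point by $c_1/q^2$ with $c_1=\frac{1+q^{-e_1}}{1-q^{-1-e_1}}$ as in \cite[Theorem~4.1]{GNP2022}; this yields $c=2$ at $(1,1,2)$ and $c=3/2$ otherwise with almost no computation. Your plan to run the same EML template for $e_2=1$ does in fact work, but only with the \emph{sharp} first bound of Proposition~\ref{lem:density} and the exact value of $\alpha$ (for $e_1=e_2=1$ one gets $\alpha=\frac{q^2-q}{q^2+1}$ and the EML bound equals the true proportion $1-\frac{1}{q(q-1)}$, which meets $1-\frac{3}{2q^2}$ with \emph{equality} at $q=3$ and forces $c=2$ at $q=2$); any weakening --- e.g.\ replacing $k/|X_1|$ by $\omega_{q^2}(e_2)$ as in the Proposition's second bound --- already fails at $(1,1,2)$. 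So your route is viable but razor-thin and you have not actually carried out the verification; attributing the slack vaguely to ``$|\GU_1(q)|=q+1$'' is not a proof. Either perform that tight computation explicitly for all $(e_1,1,q)$, or adopt the paper's direct counting argument, which is shorter and more robust.
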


\begin{proof}
  Let $V=(\F_{q^2})^d$ be a non-degenerate unitary space where $d=e_1+e_2$.
  Let $\GU_d(q)$ denote the isometry group of $V$. We have
  $|\GU_d(q)|\sim q^{d^2}$ as $q\to\infty$,  more precisely
  $|\GU_d(q)|= q^{d(d-1)/2}\prod_{i=1}^{d} (q^i-(-1)^i)=q^{d^2}\omega_{-q}(d)$
  by~\cite[p.\,118]{Taylor91}~where
  \[
    \omega_{-q}(d)=\prod_{i=1}^d(1-(-q)^{-i}).
  \]

  The proportion of pairs $(S_1, S_2) \in Y_1 \times Y_2$ for which
  $S_1\cap S_2=\{0\}$ is unchanged if we swap the subscripts. Thus we can
  henceforth assume that $1\le e_2\le e_1$. Proposition~\ref{lem:density}
  gives poor bounds when $e_2=1$. When $e_2=1$, it turns out to be simple to
  overestimate the complementary proportion using ideas in the proof
  of~\cite[Theorem~4.1]{GNP2022}. The proportion of non-degenerate
  $e_1$-subspaces (where $e_1=d-1$) that contain a given 1-subspace is
  at most $c_1/q^2$ where $c_1\coloneq\frac{1+q^{-e_1}}{1-q^{-1-e_1}}$
  by~\cite[p.\,9]{GNP2022}. This equals $\frac{2}{q^2}$
  if $(e_1,e_2,q)=(1,1,2)$. If $e_1\ge2$, we have
  $c_1\le\frac{1+2^{-2}}{1-2^{-3}}<\frac{3}{2}$, and if $q\ge3$ we have
  $c_1\le\frac{1+q^{-1}}{1-q^{-2}}=\frac{1}{1-q^{-1}}\le\frac{3}{2}$.
  Thus when $e_1 \ge 2$ and $e_2=1$ the complementary proportion is
  at most $\frac{3}{2q^2}$. This proves the claim
  when $e_2=1$.  We henceforth assume that $2\le e_2\le e_1$.
  
  The stabilizer of $S_1\in Y_1$ in $\GU(V)$ is 
  $\GU(S_1) \times \GU(S_1^\perp)$.  Hence
  \begin{align*}
    |Y_1|=|Y_2|=\frac{|\GU_{e_1+e_2}(q)|}{|\GU_{e_1}(q){\times}\GU_{e_2}(q)|}
    &=\frac{q^{2e_1e_2}\omega_{-q}(e_1+e_2)}{\omega_{-q}(e_1)\omega_{-q}(e_2)}
    =\frac{q^{2e_1e_2}}{B_{-q}(e_1,e_2)}.
  \end{align*}
  Replacing $q$ with $q^2$ shows that $|X_1|=|X_2|=(q^2)^{e_1e_2}/B_{q^2}(e_1,e_2)$,
  and hence
  \[
  \alpha_1=\frac{|Y_1|}{|X_1|}  =\frac{B_{q^2}(e_1,e_2)}{B_{-q}(e_1,e_2)}.
  \]
  Proposition~\ref{lem:density}, with $q$ replaced with $q^2$,
  gives the lower bound
  \begin{align}
    \frac{E(Y_1,Y_2)}{|Y_1||Y_2|}
    &\ge B_{q^2}(e_1,e_2)
    \left(1-\left(\frac{1}{\alpha_1}-1\right)q^{-d}\right)\nonumber \\
    &=B_{q^2}(e_1,e_2)(1+q^{-d}) -  B_{-q}(e_1,e_2)q^{-d}\nonumber \\
    &> B_{q^2}(e_1,e_2)-B_{-q}(e_1,e_2)q^{-d}.\label{EX}
   \end{align}
  Note that
  $B_{q^2}(e_1,e_2)\ge\omega_{q^2}(e_1)>\omega_{q^2}(\infty)>1-q^{-2}-q^{-4}$
  by~\cite[Lemma~3.5]{NP1995}.
    
  To find an upper bound for
  $B_{-q}(e_1,e_2)=\prod_{i=1}^{e_2}\frac{(1-(-q)^{-i})}{(1-(-q)^{-e_1-i})}$,
  we will use
  \begin{equation}\label{EE}
    (1-q^{-2i})(1+q^{-(2i+1)})<1<(1+q^{-(2j-1)})(1-q^{-2j}).
  \end{equation}
  It follows from~\eqref{EE} that
  $\prod_{i=1}^{e_2}(1-(-q)^{-i})\le 1+q^{-1}$ for all $e_2\ge1$ and
  \[
  \prod_{i=1}^{e_2}(1-(-q)^{-e_1-i}) \ge
  \begin{cases}
    1&\textup{if $e_1$ even, $e_2$ even,}\\
    1+q^{-e_1-e_2}&\textup{if $e_1$ even, $e_2$ odd,}\\
    1-q^{-e_1-1}&\textup{if $e_1$ odd, $e_2$ even,}\\
    (1-q^{-1-e_1})(1-q^{-e_1-e_2})&\textup{if $e_1$ odd, $e_2$ odd.}
  \end{cases}
  \]
  Hence the above product is greater than or equal
  to $(1-q^{-4})(1-q^{-6})$ for all $2\le e_2\le e_1$. Therefore
  $B_{-q}(e_1,e_2)\le\frac{1+q^{-1}}{(1-q^{-4})(1-q^{-6})}$.

  Since $d\ge4$, it follows from~\eqref{EX} that
  \[
    \frac{E(Y_1,Y_2)}{|Y_1||Y_2|}
    >1-q^{-2}-q^{-4}-    \frac{(1+q^{-1})q^{-4}}{(1-q^{-4})(1-q^{-6})}.
  \]
  This is greater than $1-\frac{1.26}{q^2}$ for all $q\ge4$.

  Suppose now that $q\in\{2,3\}$ and $d\ge10$. Then
  \[
   \frac{E(Y_1,Y_2)}{|Y_1||Y_2|}
   >\omega_{q^2}(\infty) - \frac{(1+q^{-1})q^{-10}}{(1-q^{-4})(1-q^{-6})}
  \]
  and since $\omega_{4}(\infty)>0.6885$, $\omega_{9}(\infty)>0.876$, the
  above bound is greater than $1-\frac{1.26}{q^2}$ for $q\in\{2,3\}$.
  It remains to consider $q\le 3$ and $2\le e_2\le e_1$ where $e_1+e_2<10$.
  In this case, a simple computer program verifies that
  \[
  \frac{E(Y_1,Y_2)}{|Y_1||Y_2|}
  \ge B_{q^2}(e_1,e_2)(1+q^{-d})-B_{-q}(e_1,e_2)q^{-d}> 1-\frac{1.26}{q^2}.
  \]
\end{proof}

\section{Future Work}\label{S:future}

A more general problem when $e_1+e_2<d$ is considered  in \cite{GNP2022a}.
Here $V=\F^d$ is a finite non-degenerate classical space and
a lower bound of the form $1-\frac{c}{|\F|}$ is sought for the proportion
of non-degenerate pairs $(S_1,S_2)$ satisfying $\dim(S_i)=e_i$ and
$S_1\cap S_2=\{0\}$. The bound given in~\cite[Theorem~1.1]{GNP2022a}
has the form $1-\frac{c}{|\F|}$ in the symplectic case, and
the orthogonal case for $q>2$, but only $1-\frac{c}{|\F|^{1/2}}$ in the
unitary case.
If one could compute the second eigenvalue $\lambda_2$ 
of the bipartite graph $\Gamma_{d,e_1,e_2}$ when $e_1+e_2<d$,
\emph{c.f.} Proposition~\ref{lem:ev}, then
it may be possible to obtain sharper lower bounds of the form $1-\frac{c}{|\F|}$
via Lemma~\ref{lem:eml}, in all cases.

\smallskip

 \paragraph*{Acknowledgment} 
 We thank the referee for their very helpful comments.
The first author is supported by the Australian Research Council
Discovery Grant DP190100450.
The second author is supported by a postdoctoral
fellowship of the Research Foundation -- Flanders~(FWO).

\vskip3mm
Data sharing is not applicable to this article as no datasets were generated or analysed during the current study.

\end{document}